\numberwithin{equation}{section}
\numberwithin{equation}{subsection}
\theoremstyle{plain}
\newtheorem{theorem}[equation]{Theorem}
\newtheorem{lemma}[equation]{Lemma}
\newtheorem{proposition}[equation]{Proposition}
\newtheorem{corollary}[equation]{Corollary}
\newtheorem{conjecture}[equation]{Conjecture}
\newtheorem{thm}[equation]{Theorem}
\newtheorem{prop}[equation]{Proposition}
\theoremstyle{definition}
\newtheorem{example}[equation]{Example}
\newtheorem{remark}[equation]{Remark}
\newtheorem{definition}[equation]{Definition}
\newtheorem{conj}[equation]{Conjecture}
\newtheorem{rem}[equation]{Remark}
\numberwithin{equation}{section}
\numberwithin{equation}{subsection}
\def\C{\mathbb C}
\def\Q{\mathbb Q}
\def\R{\mathbb R}
\def\Z{\mathbb Z}
\newcommand{\labelpar}{\label}
\newcommand{\frsw}{\mathfrak{sw}}
\title{Lattice cohomology and rational cuspidal curves}
\author{J\'{o}zsef Bodn\'{a}r}
\address{A. R\'enyi Institute of Mathematics, 1053 Budapest,
Re\'altanoda u. 13-15,  Hungary.}
\email{bodnar.jozef@renyi.mta.hu}
\author{Andr\'as N\'emethi}
\address{A. R\'enyi Institute of Mathematics, 1053 Budapest, Re\'altanoda u. 13-15, Hungary.}
\email{nemethi.andras@renyi.mta.hu}
\thanks{The first author is supported by
 ERC program `LTDBud' at R\'enyi Institute.
 The second  author is partially supported by OTKA Grant 100796. }
\keywords{rational cuspidal curves, superisolated singularities, lattice cohomology, normal surface singularities,
hypersurface singularities, links of singularities, geometric genus,
plumbing graphs, $\Q$--homology spheres, Seiberg--Witten invariant}
\subjclass[2010]{Primary. 32S05, 32S25, 32S50, 57M27,
Secondary. 14Bxx,  32Sxx, 57R57, 55N35}
\date{}
\begin{document}

\maketitle


\pagestyle{myheadings} \markboth{{\normalsize
J. Bodn\'ar and A. N\'emethi}}{ {\normalsize Lattice cohomology and rational cuspidal curves}}

\begin{abstract}
We show a counterexample to a conjecture of de Bobadilla, Luengo, Melle-Hern\'{a}ndez and N\'{e}methi on rational
 cuspidal projective plane curves, formulated in \cite{BLMN1}. The counterexample is a tricuspidal curve of degree $8$.
On the other hand, we show that if the number of cusps is at most $2$, then the original conjecture can be deduced from
 the recent results of Borodzik and Livingston (\cite{BL1}) and the computations of \cite{NR}.

Then we formulate a `simplified' (slightly weaker)
 version, more in the spirit of the motivation of the original conjecture
(comparing index type numerical invariants),
and we prove it for all currently known rational cuspidal curves.
We make all these identities and inequalities more transparent in the language of
lattice cohomology  $\mathbb{H}^{\ast}(S^3_{-d}(K))$ of surgery 3--manifolds $S^3_{-d}(K)$,
where $K = K_1 \# \cdots \# K_{\nu}$ is a connected sum of algebraic knots.

Finally, we prove that the zeroth lattice cohomology of this surgery manifold, $\mathbb{H}^0(S^3_{-d}(K))$ depends only on the
multiset of multiplicities occurring in the multiplicity sequences describing the
algebraic knots $K_i$. This result is closely
related to the lattice-cohomological reformulation of the above mentioned theorems and conjectures,
and provides new
computational and comparison procedures.
\end{abstract}

\section{Introduction}\labelpar{s:1}

\subsection{}

In \cite{BLMN1} de Bobadilla, Luengo, Melle-Hern\'{a}ndez and N\'{e}methi formulated a conjecture on the topological types of
irreducible singularities of a rational cuspidal projective plane curve $C \subset \mathbb{C}P^2$. Recently in \cite{BL1}
Borodzik and Livingston, mostly motivated by \cite{BLMN1},
 proved a necessary condition satisfied by the topological types of cusps of rational cuspidal plane curves. (They will be
reviewed  in  subsections \ref{ss:1.2} and \ref{ss:H}).
 Both of them cover some deep connection with low--dimensional topology.
 Indeed, the conjecture was motivated by
 another conjecture connecting the Seiberg--Witten invariant of the link of normal surface singularities with the geometric
genus \cite{NN}, while the proof of the main result of \cite{BL1} is based
 on the properties of the $d$--invariant of Heegaard--Floer theory. Our goal is to clarify the
 possible interactions by examples and conceptual theoretical explanations. It turns out that
 this can be done ideally using the theory of lattice cohomology.

In the comparison of the conjecture of \cite{BLMN1} and the theorem of \cite{BL1}, the number of cusps plays
a crucial role.
When there is only one cusp, then  they are equivalent; in particular,  in the unicuspidal case the theorem of
\cite{BL1} proves the conjecture of \cite{BLMN1}. However, in the case of at least two cusps, the connection between the two
conditions is less transparent, much harder.
Although the condition proved in \cite{BL1} contains equalities, while the original conjecture in \cite{BLMN1} contains
inequalities and thus it is seemingly `less precise',
we will see that it is not a combinatorial corollary of the former one
if the number of cusps is at least three.

Nevertheless, after we reformulate all the statements in the language of lattice cohomology (section \ref{s:Lattice}),
we show that for bicuspidal curves the conjecture is
implied by the results of Borodzik and Livingston \cite{BL1} and by the
 lattice cohomology formulae of  \cite{NR}.

Furthermore, we show that
for curves with at least three cusps, the original conjecture is not true, in general.

However, we formulate a weakened version of the conjecture
(more in the spirit of the motivation of the original conjecture, which intended to
connect index type numerical invariants), which quite surprisingly turns out to be true for all known rational cuspidal curves,
even for those with at least three cusps. (This is proved in section \ref{s:Verify}).

In the final section we present a procedure which makes certain lattice
cohomological computations a lot easier: it proves a stability of the lattice cohomology
with respect to some kind of `surgery manipulations' with the multiplicity
sequences of the local singularities. These computations
 are closely related to the lattice cohomological reformulation of the results of \cite{BL1},
 and enlarge drastically and conceptually those geometric situations where the output of
 \cite{BL1} is valid (showing e.g. that the global analytic realization of the local cusp types is `less' important
 among the conditions of the main theorem of \cite{BL1}).
 Accordingly, this also shows that the criterion of \cite{BL1}, as a test for the
 analytic realizability of the rational cuspidal curves, is less restrictive.
More precisely,
the main result of \cite{BL1} is a combinatorial condition on the
 collection of topological types of cusps of existing rational cuspidal projective plane curves.
 This necessary condition can be applied as a criterion when one wants to classify rational cuspidal
 curves. It turns out that this criterion is less restrictive when the number of local topological
 cusp types is larger
 (see Corollary \ref{cor:testweak} and Remark \ref{rem:stability}).

\subsection{Notations and the Conjecture from \cite{BLMN1}.}\label{ss:1.2}
Let $C \subset \mathbb{C}P^2$ be a rational cuspidal curve of degree $d$ with $\nu$ cusps
(that is, with locally irreducible singularities)
at points $P_1, P_2, \dots, P_{\nu}$. The local \emph{embedded topological} type of the singularity
at a point $P_i$ is completely determined by the \emph{semigroup} $\Gamma_i \subset \mathbb{Z}_{\geq 0}$ of the plane curve
singularity $(C,P_i)$, or, equivalently, by the Alexander polynomial $\Delta_i(t)$ of the algebraic knot
$K_i = C \cap S_i \subset S_i$, where $S_i$ is a $3$-sphere centered at $P_i$ with sufficiently small radius.
In our convention $\Delta_i$
is indeed a polynomial, and it is  normalized by $\Delta_i(1)=1$.

A short description of a local topological plane curve singularity type is provided by the \emph{multiplicity sequence} and 
the \emph{Newton pairs}. The \emph{multiplicity sequence} $[n_1, \dots, n_r]$ is a non-increasing sequence of integers, 
obtained by noting the consecutive multiplicities of exceptional divisors occuring in the series of blowups during the 
embedded resolution of the plane curve singularity. 
We will use the short form `$u_n$' for `$u, \dots, u$' ($n$ copies) in the multiplicity sequences
 (e.g. we write $[3_2,2]$ instead of $[3,3,2]$).

The \emph{Newton pairs} $\{(p_k,q_k)\}_{k=1}^r$ (with ${\rm gcd}(p_k, q_k) = 1$, $p_k \geq 2, q_k \geq 1$ and $p_1 > q_1$) 
are useful when one computes the  splice diagram or the Alexander-polynomial of the singularity. 

Both of these invariants also characterize the embedded local topological type completely, see \cite{BK} and \cite{EN}.

By \cite{GDC},
$\Gamma_i$ and  $\Delta_i$ are related as follows:
\begin{equation}\label{eq:delta}
\Delta_i(t) = (1-t)\cdot\sum_{k \in \Gamma_i} t^k.\end{equation}

The \emph{delta-invariant} $\delta_i$  of $(C,P_i)$ is the cardinality
 $ \#\{\mathbb{Z}_{\geq 0}\backslash \Gamma_i \}$.
 Set $\delta := \delta_1 + \dots + \delta_{\nu}$. A necessary condition (coming from the degree-genus formula
for singular curves) for the existence of a degree $d$ rational cuspidal curve with cusps of given topological
type is
 \begin{equation}\label{eq:deltasum}
2\delta = (d-1)(d-2).\end{equation}

Consider the product of Alexander-polynomials: $\Delta(t) := \Delta_1(t)\Delta_2(t) \cdots \Delta_{\nu}(t)$.
There is a unique polynomial $Q$ for which $\Delta(t) = 1 + \delta(t-1) + (t-1)^2Q(t)$.
Write  $Q(t) = \sum_{j=0}^{2\delta-2}q_jt^j$.
For $\nu=1$, using (\ref{eq:delta}) and properties of $\Delta_1$, one shows that (cf. \cite[\S 2]{NR})
\begin{equation}\label{eq:Qsum}
Q(t)=\sum_{s\not\in \Gamma_1} (1+t+\cdots +t^{s-1}),
\ \mbox{hence} \ \ q_j=\#\{s\not\in \Gamma_1 \, :\, s>j\} \ \ \ (\mbox{if} \ \nu=1).
\end{equation}
For arbitrary $\nu$, the dependence of the coefficients of $Q$ in terms of
$\Gamma_i$ will be given in (\ref{eq:QF}).
Notice that
 $q_0=\delta$ and $q_{2\delta-2}=1$ \cite[(2.4.4)]{NR}. From the symmetry of $\Delta$ one also gets
 \begin{equation}\label{eq:symQ}
 q_{2\delta-2-j}=q_j+j+1-\delta \ \ \ \mbox{for \ $0\leq j\leq 2\delta-2$}.
 \end{equation}
Next, set the rational function
\begin{equation}\label{eq:R}
R(t):=\frac{1}{d}\, \sum_{\xi^d=1}\frac{\Delta(\xi t)}{(1-\xi t)^2}-\frac{1-t^{d^2}}{(1-t^d)^3}.\end{equation}
In \cite[(2.4)]{BLMN1} is  proved that  $R(t)$ is a symmetric polynomial ($R(t)=t^{d(d-3)}R(1/t)$), and
\begin{equation}\label{eq:R2} R(t)=\sum_{j=0}^{d-3} \Big( q_{(d-3-j)d}-\frac{(j+1)(j+2)}{2}\Big)\,t^{(d-3-j)d}.\end{equation}

The original conjecture we want to discuss is the following:
\begin{conj} (de Bobadilla, Luengo, Melle-Hern\'{a}ndez, N\'{e}methi, \cite{BLMN1}) \label{conj:blmn}
{\it  For any rational cuspidal plane curve $C \subset \mathbb{C}P^2$ of degree $d$ the coefficients of  $R(t)$ are non--positive.}
\end{conj}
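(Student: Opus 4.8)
The plan is to strip the conjecture down to a purely combinatorial inequality about the semigroups, and only then bring in the topology. By the closed formula (\ref{eq:R2}), the non-positivity of every coefficient of $R(t)$ is equivalent to the system
\[
q_{md}\ \le\ \binom{d-1-m}{2}\qquad (0\le m\le d-3).
\]
In the unicuspidal case $\nu=1$ I would rewrite the left-hand side through the semigroup counting function $\mathcal H(n):=\#\{s\in\Gamma_1:\ s<n\}$. Indeed, (\ref{eq:Qsum}) gives $q_{md}=\#\{s\notin\Gamma_1:\ s>md\}=\delta-(md+1)+\mathcal H(md+1)$, and inserting $\delta=\binom{d-1}{2}$ collapses the target into the clean estimate
\[
\#\{s\in\Gamma_1:\ s\le md\}\ =\ \mathcal H(md+1)\ \le\ \binom{m+2}{2}\qquad (0\le m\le d-3).
\]
Thus everything reduces to bounding how many semigroup elements of each cusp can sit below the arithmetic thresholds $md$. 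The endpoints $m=0$ and $m=d-3$ already hold with equality (recovering $q_0=\delta$ and $q_{2\delta-2}=1$), a reassuring check that localizes the real content in the interior estimates.

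The second step is to recognize this reduced inequality as a statement about Heegaard--Floer $d$-invariants, equivalently about $\mathbb{H}^0$ of the surgery manifold. For an algebraic knot the semigroup counting function computes the local lattice-cohomological weights (the $V$-sequence of $K_i$), and for $K=K_1\#\cdots\#K_\nu$ these assemble into invariants of $S^3_{-d}(K)$. The bound $\binom{m+2}{2}$ is exactly the value one reads off a lens-space model, so the estimate should be forced by the smooth filling that the existence of the curve provides, constraining the $d$-invariants --- precisely the mechanism of the theorem of Borodzik--Livingston \cite{BL1}. For $\nu=1$ their equalities are equivalent to the semigroup estimate above, so the unicuspidal conjecture follows at once.

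For $\nu\ge 2$ I would pass to the product $\Delta(t)=\prod_i\Delta_i(t)$ and use (\ref{eq:QF}) to express the $q_{md}$ through the coefficients of $\prod_i\big(\sum_{k\in\Gamma_i}t^k\big)$, i.e.\ through the \emph{additive} convolution $\#\{(k_1,\dots,k_\nu)\in\Gamma_1\times\cdots\times\Gamma_\nu:\ \sum_i k_i=n\}$ of the semigroup indicator series; the task is again a bound on this convolution near $n=md$. Here lies the tension I expect to dominate the proof: $R(t)$ is built from this multiplicative, additive-convolution data, whereas the $d$-invariants (equivalently $\mathbb{H}^0(S^3_{-d}(K))$) that \cite{BL1} controls are governed by an \emph{infimum}-type convolution, the minimum of the local contributions over all ways of distributing the threshold $md$ among the $\nu$ cusps. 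For $\nu=2$ these two combinations can still be matched in the relevant range using \cite{BL1} together with the explicit lattice-cohomology formulae of \cite{NR}, so the bicuspidal case should go through.

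The hard part will be $\nu\ge 3$. Once three or more cusps are present, the additive convolution underlying $R(t)$ and the infimum convolution underlying the Heegaard--Floer bound genuinely diverge, and the equalities furnished by \cite{BL1} no longer propagate to pointwise non-positivity of every coefficient of $R(t)$. This is exactly the step I do not expect to push through; rather than a missing estimate, it signals that the intrinsically topological statement concerns $\mathbb{H}^0$ and not $R(t)$ itself. Accordingly, in this regime I would expect the correct outcome to be either a counterexample with $\nu\ge 3$, or a reformulation of the conjecture in terms of lattice cohomology that matches the min-convolution and should remain provable.
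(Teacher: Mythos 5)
Your proposal tracks the paper's own resolution of this conjecture essentially step for step: the reduction of the coefficient inequalities to $H_1(md+1)\le\binom{m+2}{2}$ and its deduction from Borodzik--Livingston for $\nu=1$ is Example \ref{ex:n1}, and the bicuspidal case is settled in subsection \ref{ss:3.4} by exactly the mechanism you point to --- the pointwise inequality $F(k)\le H(k+1)$ holds because $\mathbb{H}^q$ vanishes for $q\ge\nu=2$, so ${\rm eu}\,\mathbb{H}^{\ast}-{\rm eu}\,\mathbb{H}^{0}=-{\rm rank}_{\Z}\,\mathbb{H}^1\le 0$ for the auxiliary large surgeries computed in \cite{NR}. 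Your expectation for $\nu\ge 3$ is likewise what the paper establishes: the conjecture is false as stated (Example \ref{ex:counter} exhibits a tricuspidal degree-$8$ curve with cusps $[6]$, $[2_4]$, $[2_2]$ violating the inequality at $j=1,4$), and the surviving statement is precisely the lattice-cohomological one you anticipate, ${\rm eu}\,\mathbb{H}^{\ast}_{\rm can}\le {\rm eu}\,\mathbb{H}^{0}_{\rm can}$ (Conjecture \ref{conj:weakalt2}).
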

 In the body of the paper (Example \ref{ex:n1}; subsections \ref{ss:3.3} and \ref{ss:3.4}) we show that for $\nu \leq 2$ the computations of \cite{NR}
 reduce the conjecture to the statement of \cite{BL1},  and for $\nu \geq 3$, in general, it is false,
 see Example \ref{ex:counter}.

The main motivation for the expression $R(t)$, and for the formulation of the conjecture was
a weaker version of the statement, a comparison of an analytic invariant (the geometric genus
$p_g$) and a topological invariant (the Seiberg--Witten invariant of the link)
of the superisolated hypersurface singularity associated with $C$.
The authors of \cite{BLMN1} were led to it via the Seiberg--Witten Invariant Conjecture (SWIC) of \cite{NN}.

More precisely, let $f_d$ be the homogeneous equation
of degree $d$ of $C$, and set a generic homogeneous function $f_{d+1}$ of degree $(d+1)$. 
Then $f=f_d+f_{d+1}:(\C^3,0)\to
(\C,0)$ defines an isolated hypersurface singularity (which is superisolated)
with geometric genus
$p_g=d(d-1)(d-2)/6$. Let $L$ denote its oriented link. One shows that it is
the surgery manifold $S_{-d}^3(K)$, where $K$ is the connected sum $\#_iK_i$.
If $\widetilde{X}\to\{f=0\}$ is a resolution, we denote the canonical class of
$\widetilde{X}$ by $K_{\textrm{can}}$ and  ${\rm rank}\,H_2(\widetilde{X})$ by $s_{\widetilde{X}}$.
 Then $K_{\textrm{can}}^2+s_{\widetilde{X}}$ is an invariant of the link (in this case it
equals $-(d-1)(d^2-3d+1)$) and one also has (\cite{BLMN1})
\begin{equation}\label{eq:sw}
R(1)=-\frsw_{\textrm{can}}(L)-(K_{\textrm{can}}^2+s_{\widetilde{X}})/8-p_g.
\end{equation}
Here $\frsw_{\textrm{can}}(L)$ is the Seiberg--Witten invariant of $L$ (associated with the canonical
Spin$^c$--structure). Here we adopt the sign convention of later articles, e.g. of \cite{NB}, which is the opposite of \cite{BLMN1}.
The integer $\frsw_{\textrm{can}}
(L)+(K_{\textrm{can}}^2+s_{\widetilde{X}})/8$ is usually called the
`normalized Seiberg--Witten' invariant. $\frsw_{\textrm{can}}(L)$ can be determined (at least)
by two ways, the first goes via Turaev torsion (as in \cite{BLMN1}), or one can rely on the surgery formula of \cite{NB}.
In both cases the key term is the sum from (\ref{eq:R}).

For `certain singularities' the SWIC predicts that $R(1)=0$. This identity is not true for all
superisolated singularities (cf. \cite{BLMN1} for certain $\nu\geq 2$). Nevertheless, for such germs,
 the  simplified (weaker)
  version of the above conjecture \ref{conj:blmn} can be formulated as follows.
\begin{conj}\label{conj:weak} {\bf (Index theoretical version)}
$$R(1)\leq 0, \ \ \ \mbox{that is,}
 \ \ \ p_g\geq -\frsw(L)-(K_{\textrm{can}}^2+s_{\widetilde{X}})/8.$$ \end{conj}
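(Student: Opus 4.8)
The plan is to first turn the inequality $R(1)\le 0$ into a single combinatorial statement comparing the coefficients of $Q$ with the geometric genus. Evaluating \eqref{eq:R2} at $t=1$ and reindexing by $i=d-3-j$ gives
\[
R(1)=\sum_{j=0}^{d-3}\Big(q_{(d-3-j)d}-\tfrac{(j+1)(j+2)}{2}\Big)=\sum_{i=0}^{d-3}q_{id}-\binom{d}{3},
\]
where I used the hockey-stick identity $\sum_{m=2}^{d-1}\binom{m}{2}=\binom{d}{3}=p_g$. Thus, in view of \eqref{eq:sw}, Conjecture \ref{conj:weak} is precisely the assertion $\sum_{i=0}^{d-3}q_{id}\le p_g=\binom{d}{3}$. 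The two extreme summands are automatically sharp: by \eqref{eq:deltasum} one has $q_0=\delta=\binom{d-1}{2}$, cancelling the $i=0$ term, and $q_{(d-3)d}=q_{2\delta-2}=1$ cancels the $i=d-3$ term, so only the middle coefficients $q_{id}$ with $1\le i\le d-4$ carry the content.

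Next I would make the left-hand side intrinsic. Since $\deg Q=2\delta-2=(d-3)d$, a $d$-th root of unity filter yields $\sum_{i=0}^{d-3}q_{id}=\tfrac1d\sum_{\xi^{d}=1}Q(\xi)$, which is exactly the sum governing \eqref{eq:R}; after the reformulation of section \ref{s:Lattice} this quantity is the normalized Seiberg--Witten invariant, i.e. the Euler characteristic $\mathrm{eu}(\bH^{\ast}(S^3_{-d}(K)))$ of the lattice cohomology. The statement to prove therefore reads $\mathrm{eu}(\bH^{\ast}(S^3_{-d}(K)))\le p_g$. Each $q_{id}$ is read off from the semigroups via \eqref{eq:Qsum} and its multi-cusp version \eqref{eq:QF}: for $\nu=1$ it is a gap-counting function of $\Gamma_1$, and for $\nu\ge 2$ the corresponding convolution over the $\Gamma_i$. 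For $\nu\le 2$ the stronger coefficientwise bound $q_{id}\le\binom{d-1-i}{2}$ of Conjecture \ref{conj:blmn} holds by \cite{BL1} together with \cite{NR}, and summing gives the claim at once.

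For $\nu\ge 3$ no such coefficientwise estimate is available---Example \ref{ex:counter} exhibits individual $q_{id}$ exceeding $\binom{d-1-i}{2}$---so the inequality can only survive through global cancellation among the middle coefficients, and no uniform monotone bound is to be expected. The plan is then to verify $\sum_{i}q_{id}\le\binom{d}{3}$ directly on the (conjecturally complete) list of known rational cuspidal curves, organized by degree and cusp configuration. Each curve is recorded by the multiplicity sequences of its cusps, from which the $\Gamma_i$, the coefficients $q_{id}$, and both sides of the inequality are explicitly computable. To keep the infinite families tractable I would lean on the stability result of the final section: since $\bH^{0}(S^3_{-d}(K))$ depends only on the multiset of multiplicities, the surgery manipulations on multiplicity sequences collapse whole families onto a handful of model computations and let one propagate the inequality along them.

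The main obstacle is exactly this multi-cusp regime. Because the coefficientwise control breaks down for $\nu\ge 3$, there is no structural shortcut, and the argument must rely on the convolution structure of $Q=\prod_i\Delta_i$ combined with the classification; consequently the proof is only as strong as the current list of curves, and the genuinely new leverage is the lattice-cohomological stability that reduces infinitely many cases to finitely many checks. A secondary, purely technical difficulty is the bookkeeping: converting each multiplicity sequence into accurate gap-counting data and controlling the convolution well enough to evaluate the root-of-unity sum $\tfrac1d\sum_{\xi^{d}=1}Q(\xi)$.
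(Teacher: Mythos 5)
Your reduction of $R(1)\le 0$ to $\sum_{i=0}^{d-3}q_{id}\le p_g=d(d-1)(d-2)/6$, the identification of this sum with $\mathrm{eu}\,\bH^{\ast}_{\mathrm{can}}(S^3_{-d}(K))$, and the overall division of labour (conceptual proof for $\nu\le 2$, case-by-case verification on the known list for $\nu\ge 3$) all match the paper, which indeed only \emph{verifies} this conjecture for currently known curves (subsections \ref{ss:3.3}--\ref{ss:3.4} and section \ref{s:Verify}) rather than proving it in general. For $\nu\le 2$ your route through the coefficientwise bound is essentially subsection \ref{ss:3.4}; the paper's cleaner argument for the index version is simply that $\bH^{q}=0$ for $q\ge\nu$, so $\mathrm{eu}\,\bH^{\ast}=\mathrm{eu}\,\bH^{0}-\rank_{\Z}\bH^{1}\le \mathrm{eu}\,\bH^{0}$.

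The genuine gap is in your plan for the infinite families with $\nu\ge 3$. You propose to invoke the stability result of the last section to ``collapse whole families onto a handful of model computations,'' but Theorem \ref{thm:minconv} and Corollary \ref{cor:combstab} control only $\bH^{0}$ (equivalently the minimum convolution $H$), i.e.\ the \emph{right-hand side} of the inequality $\mathrm{eu}\,\bH^{\ast}\le\mathrm{eu}\,\bH^{0}$ --- which for existing curves is already pinned to $p_g$ by Theorem \ref{thmblalt}. The quantity that actually needs to be bounded, $\mathrm{eu}\,\bH^{\ast}$ (equivalently $R(1)$, or the $F$-function), is \emph{not} an invariant of the multiset of multiplicities: Remark \ref{ren:Notstable} records exactly this failure, and the two sporadic degree-$5$ curves of \ref{checksporadic} (cusp types $[2_2],[2_2],[2_2]$ versus $[2_3],[2],[2],[2]$, identical multisets) have $\mathrm{eu}\,\bH^{0}_{\mathrm{can}}-\mathrm{eu}\,\bH^{\ast}_{\mathrm{can}}$ equal to $6$ and $8$ respectively. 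So the stability theorem cannot propagate the inequality along a family, and the families $C_{d,u}$, $D_l$, $E_l$ remain genuinely infinite in their parameters. The paper handles them instead by writing the product $\Delta(t)=\Delta_1\Delta_2\Delta_3$ in closed form as a function of the family parameters and evaluating $R(1)$ directly via (\ref{eq:R}), obtaining explicit nonnegative closed-form expressions (e.g.\ $l(l-1)$, $4p(3p-1)+2$, $60p^2-2p$, \dots) for $\mathrm{eu}\,\bH^{0}_{\mathrm{can}}-\mathrm{eu}\,\bH^{\ast}_{\mathrm{can}}$. Without such a parametric computation (or some other uniform bound on $\mathrm{eu}\,\bH^{\ast}$), your argument does not close for the infinite families.
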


\subsection{The Hilbert--functions, or the counting functions of the semigroups}\label{ss:H} \

Instead of the semigroup $\Gamma_i$ we will often use its `counting function' $k\mapsto H_i(k)$,
\begin{equation}\label{eq:Hilbert}
H_i(k) := \#\{ s \in \ \Gamma_i : s < k \}.
 \end{equation}
 In fact,  $H_i(k)$ is the coefficient of $t^k$ in the Hilbert--function (with variable $t$) of the local singularity
 $(C,P_i)$.

\begin{example}\label{ex:n1} {\bf (The case $\nu=1$)} In this case
$q_j=\#\{s\not\in\Gamma_1: s>j\}$ (cf. (\ref{eq:Qsum}) or  \cite[\S2]{NR}).
By the symmetry of $\Gamma_1$ (that is,
$s\in\Gamma$ if and only if $2\delta-1-s\not\in\Gamma$) one also has
\begin{equation}\label{eq:QH}
q_{2\delta-2-k}=H_1(k+1) \ \ \ \mbox{for} \ k=0,\ldots, 2\delta-2.
\end{equation}
Hence, the $q$--coefficient needed in (\ref{eq:R2}) is
 $q_{(d-3-j)d}=\#\{s\in\Gamma_1:s\leq jd\}=H_1(jd+1)$.

 Furthermore, the coefficients of $R(t)$
 from equation (\ref{eq:R2}) can be reinterpreted geometrically  by B\'ezout's theorem
 as follows (for details see  \cite[Prop. 2]{BLMN1}).  The dimension of the vector space $V$
 of homogeneous polynomials $G$ of degree $j$  in three variables is $(j+1)(j+2)/2$. Fix $j<d$.
 The number of conditions for $G\in V$ to have with $C$ at $P_1$ intersection multiplicity $> jd$
 is $\#\{s\in \Gamma_1\,:\, s\leq jd\}$. Hence, $H_1(jd+1)<(j+1)(j+2)/2$ would imply the existence
 of a curve with equation $\{G=0\}$ which would contradict B\'ezout's theorem. Therefore,
{\it
if $C \subset \mathbb{C}P^2$ is a rational unicuspidal curve of degree $d$, then the counting
function $H_1$ of the local topological type of its singularity  for each $j = 0, 1, \dots, d-3$
satisfies}
  \[ q_{(d-3-j)d}=H_1(jd+1) \geq  \frac{(j+1)(j+2)}{2}. \]
In particular, this inequality and (\ref{eq:R2}) show that
for $\nu=1$ the  Conjecture \ref{conj:blmn} is equivalent to
 the vanishing of $R(t)$, and also to the  weaker version \ref{conj:weak} (and if $R(1)\leq 0$
 then necessarily $R(1)=0$).
\end{example}

\bekezdes
For arbitrary $\nu$, in terms of our present
 notation, the above inequality transforms into the following general form, cf.
 \cite[Prop. 2]{BLMN1}:
 \begin{lemma}\label{lem:Bezout}
   Let $C \subset \mathbb{C}P^2$ be a rational cuspidal curve of degree $d$ with $\nu$ cusps.
   Then the counting functions $H_i$ $(i = 1, \dots, \nu)$ of the local singularities satisfy
   \begin{equation}\label{eq:INEQB}
   \min\limits_{j_1 + j_2 + \dots + j_{\nu} = jd + 1} \{ H_1(j_1) + H_2(j_2) + \dots + H_{\nu}(j_{\nu}) \} \geq
 \frac{(j+1)(j+2)}{2} \end{equation}
  for each $j = 0, 1, \dots, d-3$.\end{lemma}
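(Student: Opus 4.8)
The plan is to run the B\'ezout-theoretic argument of Example \ref{ex:n1} at all $\nu$ cusps simultaneously: I would impose prescribed local intersection multiplicities $j_1, \dots, j_\nu$ (one at each $P_i$) on the linear system of plane curves of degree $j$, count the resulting linear conditions cusp by cusp, and force a contradiction with B\'ezout's theorem whenever the total count falls below $\dim V$.

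Concretely, I would fix $j \in \{0, \dots, d-3\}$ and an arbitrary decomposition $j_1 + \dots + j_\nu = jd+1$ with all $j_i \geq 0$, and aim to show $\sum_i H_i(j_i) \geq (j+1)(j+2)/2$; since this would then hold for \emph{every} such decomposition, it yields the bound on the minimum in (\ref{eq:INEQB}). I would argue by contradiction, assuming $\sum_i H_i(j_i) < \dim V = (j+1)(j+2)/2$, where $V$ is the space of degree-$j$ forms in three variables as in Example \ref{ex:n1}. For each $i$ let $V_i \subseteq V$ consist of those $G$ with $(G\cdot C)_{P_i} \geq j_i$. The key local input, which I would borrow verbatim from the single-cusp computation of Example \ref{ex:n1} (cf. \cite[Prop. 2]{BLMN1}), is that along a Puiseux parametrization $t \mapsto (x_i(t), y_i(t))$ of $(C, P_i)$ the condition ${\rm ord}_t\, G(x_i(t), y_i(t)) \geq j_i$ is cut out by the vanishing of the coefficients indexed by $\Gamma_i \cap [0, j_i)$; hence $V_i$ has codimension at most $\#(\Gamma_i \cap [0, j_i)) = H_i(j_i)$ in $V$.

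By subadditivity of codimension, $\bigcap_i V_i$ then has codimension at most $\sum_i H_i(j_i) < \dim V$, so it contains a nonzero form $G$. Because $\deg G = j \leq d-3 < d$ and $C$ is irreducible, the curve $\{G=0\}$ shares no component with $C$, so B\'ezout's theorem applies and I would conclude
$$jd = \sum_{P}(G\cdot C)_P \ge \sum_{i=1}^\nu (G\cdot C)_{P_i} \ge \sum_{i=1}^\nu j_i = jd+1,$$
a contradiction. This forces $\sum_i H_i(j_i) \geq (j+1)(j+2)/2$ for every admissible decomposition, and taking the minimum proves (\ref{eq:INEQB}).

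I expect the only genuinely delicate point to be the local count of conditions --- that $(G\cdot C)_{P_i}\ge j_i$ imposes codimension at most $H_i(j_i)$ --- but since this is precisely the single-cusp statement already used for $\nu = 1$, applied at each $P_i$ separately, the remaining work is the elementary bookkeeping of subadditive codimensions together with the global B\'ezout count summed over the $\nu$ cusps. A minor point worth checking is the behaviour at the boundary of the range of decompositions (where some $j_i = 0$, so that the condition at $P_i$ is vacuous and $H_i(0) = 0$), which does not affect the argument.
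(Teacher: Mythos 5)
Your argument is correct and is exactly the proof the paper has in mind: Lemma \ref{lem:Bezout} is stated with reference to \cite[Prop. 2]{BLMN1}, whose content is precisely the B\'ezout-plus-condition-counting argument of Example \ref{ex:n1} run simultaneously at all $\nu$ cusps, with the local codimension bound $\operatorname{codim} V_i \leq H_i(j_i)$ coming from the value semigroup as you describe. The edge cases you flag ($j_i=0$, and implicitly the reduction of the minimum to nonnegative decompositions since $H_i(k)=0$ for $k\leq 0$) are handled correctly and do not affect the argument.
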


 This inequality was improved by Borodzik and Livingston.

\begin{thm} \emph{(Borodzik, Livingston \cite[Theorem 5.4]{BL1})} \label{thmbl}
With the notations of Lemma \ref{lem:Bezout}, in (\ref{eq:INEQB}), in fact, one has equality
 (for each $j = 0, 1, \dots, d-3$):
  \[ \min\limits_{j_1 + j_2 + \dots + j_{\nu} = jd + 1} \{ H_1(j_1) + H_2(j_2) + \dots + H_{\nu}(j_{\nu}) \} =
 \frac{(j+1)(j+2)}{2}. \]
  \end{thm}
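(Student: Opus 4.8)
The plan is to prove the statement (Theorem~\ref{thmbl}) by combining the B\'ezout inequality of Lemma~\ref{lem:Bezout} with a matching \emph{reverse} inequality coming from the $d$--invariant technology of Heegaard--Floer theory. Since Lemma~\ref{lem:Bezout} already gives ``$\geq$'', the entire content is to establish the opposite direction
\[
\min_{j_1+\cdots+j_\nu = jd+1}\{H_1(j_1)+\cdots+H_\nu(j_\nu)\}\leq \frac{(j+1)(j+2)}{2}
\]
for each $j=0,\ldots,d-3$. First I would set up the topological framework: the link $L$ of the associated superisolated singularity is the surgery manifold $S^3_{-d}(K)$ with $K=\#_i K_i$ a connected sum of the algebraic knots, as recalled in the introduction. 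The left-hand minimum is exactly the value at level $jd+1$ of the counting function of the connected sum, because the Alexander polynomial (equivalently the semigroup counting function) of a connected sum is governed by the infimal convolution of the individual counting functions; this is the combinatorial translation of $\Delta=\prod_i\Delta_i$ in \eqref{eq:delta}. Thus the quantity to control is a single Hilbert/counting function $H$ attached to $K$.

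The heart of the argument is to identify this counting function with the sequence of $d$--invariants (correction terms) of the surgery manifolds $S^3_{-d}(K)$ across the various ${\rm Spin}^c$--structures, and then to exploit that these surgeries bound a definite (rational-homology) four-manifold. Concretely, the $(-d)$--surgery on $K\subset S^3$ is the link of the superisolated singularity and hence bounds the smooth negative-definite plumbing/resolution $\widetilde X$; on the other hand $C\subset\mathbb{C}P^2$ together with the structure of the singularity produces a second bounding four-manifold (a blown-up or punctured $\mathbb{C}P^2$) whose intersection form is positive (or at least has a controlled positive part). Comparing the $d$--invariants computed from these two fillings, via the standard inequality that a ${\rm Spin}^c$ rational-homology ball/definite cobordism imposes on correction terms, yields an \emph{upper} bound on the relevant Hilbert-function values that is sharp precisely when the quadratic $(j+1)(j+2)/2$ appears. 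This is the step where the sharp constant $(j+1)(j+2)/2$ enters: it is the contribution of the standard $\mathbb{C}P^2$ term, matching the $(1-t^{d^2})/(1-t^d)^3$ normalization visible in \eqref{eq:R}.

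The main obstacle I expect is the bookkeeping that matches the ${\rm Spin}^c$--structures on $S^3_{-d}(K)$ with the index $j$ and with the correct residue classes modulo $d$, and then translating the $d$--invariant inequality into the stated form in terms of the $H_i$. This requires the precise surgery formula for $d$--invariants of large (here negative) surgeries in terms of the knot Floer homology / the $V$-- and $H$--type local $h$--functions, together with the fact that for \emph{algebraic} knots these local lattice-homology data coincide with the semigroup counting functions $H_i$ of \eqref{eq:Hilbert}. In other words, the identification ``$d$--invariant $\leftrightarrow$ semigroup counting function'' must be made for each $K_i$ and then assembled under connected sum. Once that dictionary is in place, the reverse inequality follows from the existence of the positive filling and the equality in the theorem is forced by sandwiching it between the B\'ezout bound of Lemma~\ref{lem:Bezout} and this Heegaard--Floer bound. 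I would finally remark that the sharpness (equality rather than inequality) is exactly the extra information that the $d$--invariant argument of Borodzik--Livingston contributes beyond the classical B\'ezout estimate.
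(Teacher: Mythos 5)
You should first be aware that the paper contains no proof of Theorem \ref{thmbl}: it is imported verbatim from Borodzik--Livingston \cite{BL1} (their Theorem 5.4), and the only hint the present paper gives about its proof is the remark in the introduction that it ``is based on the properties of the $d$--invariant of Heegaard--Floer theory.'' So there is no internal argument to compare yours against; what you have written is a reconstruction of the external one. As such, your outline does point at the right ingredients: the large--surgery formula expressing correction terms through the $V$--type invariants of the knot, the dictionary between those invariants and the semigroup counting function for an algebraic (hence L--space) knot, the fact that under connected sum these data combine precisely into the minimum convolution $H_1\diamond\cdots\diamond H_\nu$, and a four--dimensional filling supplied by the embedding $C\subset\mathbb{C}P^2$.

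As a proof, however, this remains a plan rather than an argument: every substantive step is named and deferred, and at least two of them (the infimal--convolution formula for $V_m$ of a connected sum of L--space knots, and the ${\rm Spin}^c$ bookkeeping matching $j$ with residues mod $d$) carry real content. There is also a structural inaccuracy worth flagging. In \cite{BL1} the relevant manifold is the \emph{positive} surgery $S^3_{d^2}(K)$, the boundary of a regular neighbourhood $N(C)$ of the curve, and the second filling is the complement $W=\mathbb{C}P^2\setminus {\rm int}\,N(C)$, which has $b_2(W)=0$. Since $W$ is (vacuously) definite with respect to both orientations, the correction terms of its boundary vanish on the ${\rm Spin}^c$ structures extending over $W$, and the \emph{equality} of the theorem drops out in one stroke; one neither needs Lemma \ref{lem:Bezout} for the ``$\geq$'' half nor a ``blown-up or punctured $\mathbb{C}P^2$ with positive intersection form.'' Your alternative framing via $S^3_{-d}(K)$ and the resolution $\widetilde X$ (which is negative definite with large $b_2$) would only yield a one--sided correction--term inequality and leads into the Seiberg--Witten--invariant circle of ideas rather than to the sharp statement; the one--sided bound you propose to extract is not easier to establish than the two--sided one, so the sandwich with B\'ezout does not actually save work.
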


It is convenient to reformulate the identity as follows (cf. \cite[5.3]{BL1}). For any two functions $H_1$ and $H_2$
 (defined on integers and bounded from below) we define the \emph{`minimum convolution'}, denoted by $H_1 \diamond H_2$,
 in the following way:
\[ (H_1 \diamond H_2)(j) = \min\limits_{j_1+j_2=j}\{ H_1(j_1) + H_2(j_2) \}.\]
Then from the counting functions $\{H_i\}_{i=1}^\nu$
we construct   $H := H_1 \diamond H_2 \diamond \dots \diamond H_{\nu}$ (it is clear that the operator $\diamond$
is associative and commutative). Then the statement of the previous Theorem \ref{thmbl}
says that  for all $j = 0, 1, \dots, d-3$ one has
\begin{equation} \label{eq:bor_liv}
H(jd+1) = \frac{(j+1)(j+2)}{2}. \end{equation}

It is clear that for $\nu = 1$ (when $H = H_1$) this result implies the original Conjecture \ref{conj:blmn} (hence,
the `index theoretical version' \ref{conj:weak} as well). Keeping in mind Lemma \ref{lem:Bezout}, in fact,
the statements of Conjecture \ref{conj:blmn} and Theorem \ref{thmbl} are  equivalent.

In the last point of \cite[Remark 5.5]{BL1} the authors ask about the relation of the
two statements  for $\nu \geq 2$. We will completely clarify this relation in the next two sections.

We end this section by the following symmetry property of $H$, the analogue of (\ref{eq:symQ}).
\begin{lemma}\label{lem:symH}
$H(2\delta-2-j+1)=H(j+1)-j-1+\delta$ for every $j\in\Z$.
\end{lemma}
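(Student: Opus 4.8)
The plan is to recast the identity in a cleaner reflection form, prove that form for a single semigroup, and then show that the minimum--convolution operator $\diamond$ transports the reflection symmetry while adding the relevant constants. Substituting $m := j+1$, the claimed identity reads
\[ H(2\delta - m) = H(m) - m + \delta \qquad (m \in \Z), \]
since $2\delta-2-j+1 = 2\delta - m$ and $H(j+1)-j-1+\delta = H(m) - m + \delta$. Thus it suffices to prove that $H$ obeys an affine reflection symmetry $F(c-m) = F(m) - m + a$ with $c = 2\delta$ and $a = \delta$, and the natural route is to establish this for each factor $H_i$ and then propagate it through $\diamond$.

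First I would establish the symmetry for each $H_i$. Each $\Gamma_i$ is a symmetric (Gorenstein) numerical semigroup of conductor $2\delta_i$, meaning $s \in \Gamma_i \iff 2\delta_i - 1 - s \notin \Gamma_i$ and $\{2\delta_i, 2\delta_i+1,\dots\} \subset \Gamma_i$. Counting the nonnegative integers below $k$ gives $\#\{s \notin \Gamma_i : s < k\} = k - H_i(k)$ for $k \geq 0$, while reflecting each gap $s<k$ to the semigroup element $2\delta_i - 1 - s$ identifies this set with $\{t \in \Gamma_i : 2\delta_i - k \leq t \leq 2\delta_i - 1\}$, of cardinality $\delta_i - H_i(2\delta_i - k)$ (using $H_i(2\delta_i) = \delta_i$). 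Equating yields $H_i(2\delta_i - k) = H_i(k) - k + \delta_i$; the boundary cases $k \leq 0$ and $k \geq 2\delta_i$, where $H_i$ equals $0$ and $k - \delta_i$ respectively, are checked directly, so the identity holds for all $k \in \Z$.

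Next I would show that $\diamond$ preserves this class of symmetries. Suppose $F(c_F - m) = F(m) - m + a_F$ and $G(c_G - m) = G(m) - m + a_G$. Writing $k_1 = c_F - k_1'$ and $k_2 = c_G - k_2'$ in the definition of $F \diamond G$ turns the constraint $k_1 + k_2 = (c_F+c_G) - m$ into $k_1' + k_2' = m$, whence
\[ (F \diamond G)\big((c_F+c_G) - m\big) = \min_{k_1' + k_2' = m}\{F(k_1') + G(k_2')\} - m + a_F + a_G = (F \diamond G)(m) - m + (a_F + a_G). \]
So $F \diamond G$ satisfies the same symmetry with constants $c_F + c_G$ and $a_F + a_G$. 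Since $\diamond$ is associative and commutative, iterating over $i = 1,\dots,\nu$ shows that $H = H_1 \diamond \cdots \diamond H_\nu$ satisfies the reflection identity with $c = \sum_i 2\delta_i = 2\delta$ and $a = \sum_i \delta_i = \delta$, which is exactly the displayed identity and hence the lemma.

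The only point requiring care is bookkeeping rather than a genuine obstacle: one must verify the single--semigroup symmetry uniformly for \emph{all} integers $k$ (not merely in the range $0 \leq k \leq 2\delta_i$, where the reflection is visually symmetric), and then track the change of variables in the convolution so that the linear term $-m$ and the additive constants $a_F, a_G$ combine correctly. No input beyond the Gorenstein symmetry of each $\Gamma_i$ and the elementary algebra of $\diamond$ is needed.
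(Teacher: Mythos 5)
Your proof is correct and follows essentially the same route as the paper: the paper's (two-line) proof likewise derives $H_i(2\delta_i-k)=H_i(k)-k+\delta_i$ from the Gorenstein symmetry of each $\Gamma_i$ and then pushes this through the definition of $\diamond$. You have merely written out in full the details the paper leaves implicit, including the boundary cases and the additivity of the constants under convolution.
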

\begin{proof}
By the symmetry of each semigroup one gets for each counting function
$H_i(j_i)=H_i(2\delta_i-j_i)+j_i-\delta_i$ for any $j_i\in \Z$. Then use the definition of
$H$.
\end{proof}

\section{Combinatorial comparison of Conjecture \ref{conj:blmn} and Theorem \ref{thmbl}}\label{s:Combinatorial}

\subsection{Reformulation of Conjecture \ref{conj:blmn}.}
Conjecture \ref{conj:blmn}
and the coefficients in equation (\ref{eq:R2})
 resemble the identity  \eqref{eq:bor_liv}. Let us emphasize the difference.

 We start in both cases with the counting functions $H_i$.
 In the Borodzik--Livingston theorem 
 one has to take the  `minimum convolution' $H = H_1 \diamond H_2 \diamond \dots \diamond H_{\nu}$
 and $H(jd+1)$ is compared with $(j+1)(j+2)/2$ in  \eqref{eq:bor_liv}.
 In the conjecture \ref{conj:blmn} first one determines  $\Delta_i$ from
  $H_i$ by (\ref{eq:delta}) and (\ref{eq:Hilbert}). Then one takes the product of all $\Delta_i$,
  and finally one takes the coefficients of $Q$, which is compared with $(j+1)(j+2)/2$.

 Next,  we make explicit these last steps and we provide
  the combinatorial formula for $q_j$.

Define sequences $\{h^{(i)}_j\}_{j=0}^{\infty}$ by $h^{(i)}_j := H_i(j+1)$ (notice the shift by one). For any
sequence $a = \{a_j\}_{j=0}^{\infty}$ denote by $\partial a$ its {\it difference sequence}, i.e.
 $(\partial a)_j = a_j - a_{j-1}$ with the convention that the `$(-1)$st element' of a sequence is always zero,
 i.e. $a_{-1} = 0$. Similarly, we will denote by $\varSigma a$ the {\it sequence of partial sums},
i.e.  $(\varSigma a)_j = a_0 + \dots + a_j$. Of course, $\varSigma \partial a = a$ for any sequence $a$.

By (\ref{eq:delta}) and (\ref{eq:Hilbert}),
the coefficient $c^{(i)}_j$ of $t^j$ in $\Delta_i(t)$ can be
written as $c^{(i)}_j = (\partial \partial h^{(i)})_j$.  

The coefficient sequence of a polynomial product is
the usual convolution of coefficient sequences of the factors. Hence,
 the coefficient $c_j$ of $t^j$ in $\Delta(t)$ is
\[ c_j = \sum_{j_1 + \dots + j_{\nu} = j} c^{(1)}_{j_1} \cdots c^{(\nu)}_{j_{\nu}}. \]
Denoting the convolution of two sequences $a = \{a_j\}_{j=0}^{\infty}$ and $b = \{b_j\}_{j=0}^{\infty}$ by
$a \ast b$, i.e. $(a \ast b)_j = \sum_{k=0}^ja_kb_{j-k}$, we get
$ c_j = (\partial \partial h^{(1)} \ast \dots \ast \partial \partial h^{(\nu)})_j $.
Let us define:
$$F(j) := (\varSigma \varSigma (\partial \partial h^{(1)} \ast \dots \ast \partial
 \partial h^{(\nu)}))_j.$$
If $A(t)=\sum_ja_jt^j$ and $B(t)=\sum_jb_jt^j$ satisfy $A(t)=A(1)+(t-1)B(t)$,
then $(\varSigma a)_j=A(1)-b_j$. This applied twice for $\Delta$ gives
$(\varSigma \varSigma c)_j=j+1-\delta +q_j$. Hence,
 the definition of $Q$ and (\ref{eq:symQ}) provides
\begin{equation}\label{eq:QF}
 q_{2\delta-2-j}  = (\varSigma \varSigma (\partial \partial h^{(1)} \ast \dots \ast \partial \partial
h^{(\nu)}))_j\ =F(j)\ \ \ \mbox{for \ $0 \leq j \leq 2\delta-2$}. \end{equation}
Now we can reformulate the inequalities of Conjecture \ref{conj:blmn}.
\begin{conj}\label{conj:firstref} {\bf (Alternative form of Conjecture \ref{conj:blmn})} \

{\it
Let $C \subset \mathbb{C}P^2$ be a rational cuspidal curve of degree $d$ with $\nu$ cusps of given
topological types (in particular, $d(d-3) = 2\delta-2$).
Set $F(j) := (\varSigma \varSigma (\partial \partial h^{(1)} \ast \dots \ast \partial \partial h^{(\nu)}))_j$, where
 $h^{(i)}_j=H_i(j+1)$, and $H_i$ is the semigroup counting function of the $i$--th singularity. Then}
\begin{equation}\label{eq:u}
F(jd) \leq \frac{(j + 1)(j + 2)}{2} \ \ \
\mbox{for all $j = 0, 1, \dots, d-3$}.\end{equation}\end{conj}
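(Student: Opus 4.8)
The plan is to show that Conjecture~\ref{conj:firstref} is nothing but Conjecture~\ref{conj:blmn} transcribed into the language of the function $F$; accordingly I would establish the equivalence of the two systems of inequalities by a short chain of index substitutions, using only identities already recorded in the excerpt. The substance has in effect been front-loaded into the derivations of (\ref{eq:R2}) and (\ref{eq:QF}), so no new analytic or combinatorial input should be needed.

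First I would read off from (\ref{eq:R2}) that the coefficient of $t^{(d-3-j)d}$ in $R(t)$ equals $q_{(d-3-j)d}-(j+1)(j+2)/2$, and that as $j$ ranges over $0,1,\dots,d-3$ the exponents $(d-3-j)d$ are distinct and exhaust the support of $R$. Hence Conjecture~\ref{conj:blmn}, the non-positivity of every coefficient of $R$, is literally the assertion
\[
q_{(d-3-j)d}\ \leq\ \frac{(j+1)(j+2)}{2}\qquad(j=0,1,\dots,d-3).
\]
So the whole task reduces to rewriting the left-hand $q$-coefficients through $F$.

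Next I would invoke the degree--genus relation (\ref{eq:deltasum}), which gives $2\delta-2=d(d-3)$ and hence the clean identity $(d-3-j)d=2\delta-2-jd$. For $0\leq j\leq d-3$ the shifted index $jd$ runs through $0,d,\dots,(d-3)d=2\delta-2$, so it always lies in the admissible range $0\leq jd\leq 2\delta-2$ in which (\ref{eq:QF}) holds. Evaluating (\ref{eq:QF}) at $jd$ then yields $q_{2\delta-2-jd}=F(jd)$, that is $q_{(d-3-j)d}=F(jd)$. Substituting this into the displayed inequality turns it into $F(jd)\leq(j+1)(j+2)/2$ for $j=0,\dots,d-3$, which is exactly (\ref{eq:u}).

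Since every step above is an equality or a reversible reindexing, the same computation read backwards gives the converse implication, so the two conjectures are genuinely equivalent. The only points where I would be careful are purely clerical: checking that $jd$ stays inside the range of validity of (\ref{eq:QF}) at both endpoints $j=0$ and $j=d-3$, and verifying the arithmetic identity $(d-3-j)d=2\delta-2-jd$ against (\ref{eq:deltasum}). There is no real obstacle; the content lives in (\ref{eq:R2}) and (\ref{eq:QF}), and Conjecture~\ref{conj:firstref} follows from them by translation alone.
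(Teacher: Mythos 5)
Your argument is correct and is exactly the paper's route: the paper derives the key identity (\ref{eq:QF}) and then states Conjecture \ref{conj:firstref} as an immediate consequence via (\ref{eq:R2}) and the reindexing $(d-3-j)d = 2\delta-2-jd$ coming from (\ref{eq:deltasum}). Your write-up just makes explicit the substitution step that the paper leaves implicit.
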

The `index  version' is obtained by taking sum.
\begin{conj}\label{conj:weakalt} {\bf (Index theoretical version, first alternative form)}\ 

{\it Under the conditions and with the notation of \ref{conj:firstref},}
\[ \sum_{j=0}^{d-3} F(jd) \leq  \sum_{j=0}^{d-3} \frac{(j+1)(j+2)}{2}=
\frac{d(d-1)(d-2)}{6}. \]
\end{conj}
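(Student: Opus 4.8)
The plan is to read Conjecture~\ref{conj:weakalt} as the summed form of Conjecture~\ref{conj:firstref} and to make its index-theoretic content explicit. If the termwise inequalities~\eqref{eq:u} hold, then adding them over $j=0,1,\dots,d-3$ at once gives $\sum_{j=0}^{d-3}F(jd)\leq\sum_{j=0}^{d-3}(j+1)(j+2)/2$, so the only genuine computation is the closed form of the right-hand side. Writing $(j+1)(j+2)/2=\binom{j+2}{2}$ and reindexing by $m=j+2$, the hockey-stick identity gives $\sum_{m=2}^{d-1}\binom{m}{2}=\binom{d}{3}=d(d-1)(d-2)/6$. This already proves Conjecture~\ref{conj:weakalt} whenever Conjecture~\ref{conj:firstref} (equivalently~\ref{conj:blmn}) is known to hold.

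Next I would exhibit the identity that turns this summation into the Seiberg--Witten comparison. Since $2\delta-2=d(d-3)$, we have $(d-3-j)d=2\delta-2-jd$, so by \eqref{eq:QF} we get $q_{(d-3-j)d}=q_{2\delta-2-jd}=F(jd)$ for $0\le j\le d-3$. Evaluating \eqref{eq:R2} at $t=1$ then yields
\[ R(1)=\sum_{j=0}^{d-3}\Big(F(jd)-\frac{(j+1)(j+2)}{2}\Big)=\sum_{j=0}^{d-3}F(jd)-\frac{d(d-1)(d-2)}{6}. \]
Thus Conjecture~\ref{conj:weakalt} is literally the assertion $R(1)\le 0$, which by \eqref{eq:sw} is the inequality $p_g\geq -\frsw_{\textrm{can}}(L)-(K_{\textrm{can}}^2+s_{\widetilde X})/8$ of Conjecture~\ref{conj:weak}. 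In particular the two forms of the index conjecture coincide, and the summation above is precisely the bridge between the combinatorial inequalities and the Seiberg--Witten formulation.

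The hard part is that the summation argument only delivers Conjecture~\ref{conj:weakalt} when the termwise inequalities of Conjecture~\ref{conj:firstref} are available, and these fail in general for $\nu\geq 3$ (cf. Example~\ref{ex:counter}). When some summands $F(jd)-(j+1)(j+2)/2$ are positive, the content of Conjecture~\ref{conj:weakalt} is that the remaining negative summands still dominate, keeping the total $R(1)$ nonpositive; this cancellation is a genuine phenomenon that does not follow formally from the material of this section. I would therefore not expect a uniform combinatorial proof here. The honest route is to establish $R(1)\le 0$ directly for the relevant families, which is the case-by-case verification carried out in Section~\ref{s:Verify}; the reformulation above is exactly what makes those computations tractable.
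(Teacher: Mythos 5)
Your proposal is correct and follows essentially the same route as the paper: Conjecture \ref{conj:weakalt} is obtained simply by summing the termwise inequalities of Conjecture \ref{conj:firstref}, and your identification of the sum with $R(1)\le 0$ via (\ref{eq:QF}) and (\ref{eq:R2}) is exactly the bridge the paper uses to equate it with Conjecture \ref{conj:weak}. You also correctly diagnose that no general proof is available since the termwise inequalities fail for $\nu\ge 3$, which matches the paper's actual treatment (a lattice-cohomology argument for $\nu\le 2$ and case-by-case verification in Section \ref{s:Verify} otherwise).
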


\subsection{Examples and counterexamples.}

Let us summarize the situation. Starting from the semigroups of $\nu$ local singularities we can define integral
functions $H$ and $F$ depending only on the local topological types of the singularities.

\begin{definition}\label{def:candidates}
If the sum  of delta invariants of the local singularity types, $\delta$,
is of form $2\delta =(d-1)(d-2)$
 for some integer $d$, we say that these $\nu$ local topological types are \emph{candidates} to be the
$\nu$ singularities of a rational cuspidal plane curve of degree $d$.
\end{definition}

If such a curve exists  then we
 have a theorem stating `each $d$--th value' of  $H$,
 and also a conjecture giving an upper bound on `each $d$--th value' of  $F$.

As we already mentioned, if $\nu = 1$ then  $F(k) = H(k+1)$ for each $k\in\Z_{\geq 0}$
 (not just for $k\in d \cdot \Z_{\geq 0}$), and the theorem implies the conjecture (and
 the inequalities are equalities).

However, for $\nu > 1$ the values $F(k)$ and $H(k+1)$ become different. If one starts to play with
{\it two} singularities, one can notice that $F(k) \leq H(k+1)$ seem to be true  (for every 
integer $k\geq 0$, not just for the multiples of $d$). [Later, using lattice cohomology interpretations, we will
 prove that this is indeed true for $\nu = 2$, cf. \ref{ss:3.4}.]
With these  facts in mind, it is tempting to conjecture that maybe {\it the inequality
 $F(k) \leq H(k+1)$ is always true --- even
independently of $d$ ---}
which would be an interesting, completely combinatorial statement making Conjecture \ref{conj:blmn} being
 a far more weaker corollary of Theorem \ref{thmbl}. But, for $\nu \geq 3$ there is no such relation between
functions $F$ and $H$, as we will demonstrate next.

\begin{example} \label{ex:first222}
Take $\nu=3$, and assume that all local singularities are `simple' cusps, that is cusps
with multiplicity seqence $[2]$ (or, equivalently, with one Newton pair $(2,3)$, or with semigroup
$\langle2,3\rangle = \{0, 2, 3, 4, \dots\}$). Then the functions $F$ and $H$ are as follows:

\vspace{2mm}

\begin{center}
\begin{tabular}{|c | c | c | c | c | c |} \hline
  $k$ & $0$ & $1$ & $2$ & $3$ & $4$ \\ \hline
  $H(k+1)$ & $1$ & $1$ & $2$ & $2$ & $3$ \\ \hline
  $F(k)$ & $1$ & $-1$ & $3$ & $0$ & $3$ \\ \hline
  $H(k+1)-F(k)$ & $0$ & $2$ & $-1$ & $2$ & $0$ \\ \hline
\end{tabular}
\end{center}

\vspace{2mm}

\noindent Notice that for $k = 2$ the desired inequality $F(k) \leq H(k+1)$  fails.
Hence the inequality $F(k) \leq H(k+1)$ cannot be true for {\it any integer } $k$.
(By the way, this collection of local cusp types can be realized on a rational tricuspidal curve of degree four, cf. Proposition \ref{prop:tricuspidal}.)

\end{example}

\begin{example}
Consider now a tricuspidal rational projective plane curve of degree $d = 5$ such that each of its singularities has one
Newton pair, namely  $(3,4)$, $(2,5)$ and $(2,3)$, or multiplicity sequences $[3], [2_2]$ and $[2]$, respectively (cf. Proposition \ref{prop:tricuspidal}). So each semigroup is generated (over $\mathbb{Z}_{\geq 0}$) by
 the corresponding two integers.
A computation shows that the function values are as follows:

\vspace{2mm}

\begin{center}
\begin{tabular}{|c | c | c | c | c | c | c | c | c | c | c | c |} \hline
  $k$ & $0$ & $1$ & $2$ & $3$ & $4$ & $5$ & $6$ & $7$ & $8$ & $9$ & $10$ \\ \hline
  $H(k+1)$ & $1$ & $1$ & $1$ & $2$ & $2$ & $3$ & $3$ & $4$ & $4$ & $5$ & $6$ \\ \hline
  $F(k)$ & $1$ & $-1$ & $2$ & $0$ & $2$ & $1$ & $3$ & $2$ & $5$ & $3$ & $6$ \\ \hline
  $H(k+1)-F(k)$ & $0$ & $2$ & $-1$ & $2$ & $0$ & $2$ & $0$ & $2$ & $-1$ & $2$ & $0$ \\ \hline
\end{tabular}
\end{center}

\vspace{2mm}

\noindent
Since the data are provided by
an existing curve of degree $5$, the $H(jd + 1)$-values at $jd=k = 0, 5, 10$ are the corresponding triangular numbers
($1, 3, 6$, respectively), as predicted by Theorem \ref{thmbl}. Notice also that again, the desired inequality
$F(k) \leq H(k+1)$ fails at $k = 2, 8$. However, the inequalities needed
for Conjecture \ref{conj:blmn} corresponding to  $k = 0, 5, 10$ (multiples of $d$) are
 true. (This example appears in \cite{BLMN1}, supporting Conjecture \ref{conj:blmn}.)
 So one could still hope in the inequality $F(k) \leq H(k+1)$ in the case of existing curves and for $k \in d \cdot \mathbb{Z}$, where $d$ is the degree of the curve.
\end{example}

\begin{example}\label{ex:counter}{\bf (Counterexample to Conjecture \ref{conj:blmn})}

 Consider three semigroups given by two generators each as follows: $\Gamma_1 = \left\langle 6, 7 \right\rangle$,
 $\Gamma_2 = \left\langle 2, 9 \right\rangle$ and $\Gamma_3 = \left\langle 2, 5 \right\rangle$. These are semigroups of
 plane curve singularities characterized by multiplicity sequences $[6]$, $[2_4]$ and $[2_2]$, respectively.
 There exists a rational tricuspidal curve of degree $d = 8$ with three singularities exactly of this topological type
(cf. Proposition \ref{prop:tricuspidal}).
 The values of functions $H$ and $F$ are as follows (we are interested only in the values at the multiples of $d$):

\vspace{2mm}

\begin{center}
\begin{tabular}{|c | c | c | c | c | c | c | c | c | c | c | c |} \hline
  $k$ & $0$ & $\dots$ & $8$ & $\dots$ & $16$ & $\dots$ & $24$ & $\dots$ & $32$ & $\dots$ & $40$ \\ \hline
  $H(k+1)$ & $1$ & $\dots$ & $3$ & $\dots$ & $6$ & $\dots$ & $10$ & $\dots$ & $15$ & $\dots$ & $21$ \\ \hline
  $F(k)$ & $1$ & $\dots$ & $4$ & $\dots$ & $5$ & $\dots$ & $9$ & $\dots$ & $16$ & $\dots$ & $21$ \\ \hline
  $H(k+1)-F(k)$ & $0$ & $\dots$ & $-1$ & $\dots$ & $1$ & $\dots$ & $1$ & $\dots$ & $-1$ & $\dots$ & $0$ \\ \hline
\end{tabular}
\end{center}

\vspace{2mm}

Of course Theorem \ref{thmbl} is satisfied (we see the triangular numbers in the second row). The condition
$\sum_{j=0}^{d-3} F(jd) \leq \sum_{j=0}^{d-3} \frac{(j+1)(j+2)}{2}$, i.e. the  (index theoretical version)
  Conjecture \ref{conj:weakalt}
 is also satisfied (by summation of the  last row, in fact, by equality).

 However, for $j = 1$ and $j = 4$ the inequality $F(jd) \leq \frac{(j+1)(j+2)}{2}$ fails,
 hence {\it this is a counterexample to Conjecture \ref{conj:blmn}.}

This example was not checked in \cite{BLMN1},
as it was not clear at that time that the number of cusps
 was crucial.  Whole series with $\nu = 1$  were checked and
 other examples (also with $\nu \geq 3$), but only up to
degree $7$ (note that a complete classification of cuspidal curves exists only up to degree $6$).
As we will see later in Remark \ref{rem:detailed} the smallest degree where Conjecture \ref{conj:blmn} fails among currently
 known rational cuspidal curves is exactly degree $8$.

This example also shows that the inequalities of the Conjecture \ref{conj:blmn}
are {\it not combinatorial consequences} of the equalities of Theorem \ref{thmbl}.
Moreover, the inequalities $F(k) \leq H(k+1)$ are not true in general, not even for existing
curves of degree $d$ and setting $k \in d \cdot \mathbb{Z}$.
\end{example}

\begin{example}
The following example will show that 
the weakened version, Conjecture \ref{conj:weakalt} is
{\it not a combinatorial consequence} of the equalities of Theorem \ref{thmbl} either.

Consider three semigroups given by their generators:
 $\Gamma_1 = \left\langle 3, 5\right\rangle$, $\Gamma_2 = \left\langle 2, 3\right\rangle$, $\Gamma_3 =
 \left\langle 2, 3\right\rangle$. (The corresponding multiplicity sequences are $[3,2], [2], [2]$, respectively.) 
The sum of delta-invariants is $4 + 1 + 1 =(5-1)(5-2)/2$, so these three topological
types of local singularities are possible candidates for three cusps of a tricuspidal rational projective plane curve of
 degree $d = 5$. Since the complex projective quintics are completely classified, it is known that such a curve does not exist.
However, Theorem \ref{thmbl} does not exclude the existence of this curve, as the values $H(k+1)$ are again
$1, 3, 6$ at $k = 0, 5, 10$, respectively.

\vspace{2mm}

\begin{center}
\begin{tabular}{|c | c | c | c | c | c | c | c | c | c | c | c |} \hline
  $k$ & $0$ & $1$ & $2$ & $3$ & $4$ & $5$ & $6$ & $7$ & $8$ & $9$ & $10$ \\ \hline
  $H(k+1)$ & $1$ & $1$ & $1$ & $2$ & $2$ & $3$ & $3$ & $4$ & $4$ & $5$ & $6$ \\ \hline
  $F(k)$ & $1$ & $-1$ & $2$ & $1$ & $0$ & $4$ & $1$ & $3$ & $5$ & $3$ & $6$ \\ \hline
  $H(k+1)-F(k)$ & $0$ & $2$ & $-1$ & $1$ & $2$ & $-1$ & $2$ & $1$ & $-1$ & $2$ & $0$ \\ \hline
\end{tabular}
\end{center}

\vspace{2mm}

On the other hand, the inequality of Conjecture \ref{conj:blmn} fails at $k = 5$. (Of course it does not mean anything
 from the point of view of the question of existence of this curve, as in the previous example it also failed for an
existing curve.) But, additionally,
for this candidate the weakened version \ref{conj:weakalt} also fails.

Therefore, if Conjecture \ref{conj:weakalt} would be proved (independently of the classification of projective curves), 
it would provide {\it an independent tool for checking
 wether a given collection of local topological singularity types can be realized as the collection of cusp types of a
rational cuspidal projective plane curve.}
\end{example}

\section{Lattice cohomological comparison}\label{s:Lattice}

\subsection{}
Now we show the lattice-cohomological meaning of the values of functions $H$ and $F$. From this point of view,
it will be obvious that, on one hand, for $\nu = 2$ inequalities $F(k) \leq H(k + 1)$ hold, but on the other hand,
we cannot expect such a relation for $\nu \geq 3$. The necessary computations are done in \cite{NR}, but they were not
analyzed  from the present point of view.

\bekezdes \label{latticeshort} For the definition of lattice cohomology, see \cite{Nlattice}. There is a detailed description in Section 3 of \cite{NR} as well. In short, the construction is the following.

Usually one starts with a lattice $\Z^s$ with fixed base elements $\{E_i\}_i$. This automatically
provides a cubical decomposition of $\R^s=\Z^s\otimes \R$: the 0--cubes are the lattice points
$l\in \Z^s$, the 1--cubes are the `segments' with endpoints $l$ and $l+E_i$, and more generally,
a $q$--cube $\square=(l,I)$ is determined by a lattice point $l\in \Z^s$ and a subset $I\subset \{1,\ldots,s\}$
 with $\# I=q$, and it has vertices at the lattice points $l+\sum_{j\in J}E_j$ for different $J\subset I$.

 One also takes a weight function $w:\Z^s\to \Z$ bounded below,
  and for each cube $\square=(l,I)$ one takes
 $w(\square):=\max\{w(v), \ \mbox{$v$ vertex of $\square$}\}$.  Then, for each integer $n\geq \min(w)$
 one considers the simplicial complex $S_n$, the union of all the cubes (of any dimension)
 with $w(\square)\leq n$. Then the {\it lattice cohomology associated with $w$} is
 $\{\mathbb{H}^q(\Z^s,w)\}_{q\geq 0}$, defined by $\mathbb{H}^q(\Z^s,w):=\oplus _{n\geq \min(w)}
 H^q(S_n,\Z)$. Each $\mathbb{H}^q$ is graded (by $n$) and it is a $\Z[U]$--module, where
 the $U$--action consists of  the restriction maps induced by the inclusions $S_n\hookrightarrow S_{n+1}$.
 Similarly, one defines the {\it reduced cohomology associated with $w$} by
  $\mathbb{H}_{{\rm red}}^q(\Z^s,w):=\oplus _{n\geq \min(w)}
 \tilde{H}^q(S_n,\Z)$. In all our cases $\mathbb{H}_{{\rm red}}^q(\Z^s,w)$ has finite $\Z$--rank.
The normalized Euler characteristic of  $\mathbb{H}^*(\Z^s,w)$ is ${\rm eu}\,\mathbb{H}^*:=
-\min(w)+\sum_{q\geq 0}\, (-1)^q\,{\rm rank}_\Z\, \mathbb{H}^q_{{\rm red}}$. Formally, we also set
${\rm eu}\,\mathbb{H}^0:=-\min(w)+{\rm rank}_\Z\, \mathbb{H}^0_{{\rm red}}$.

\bekezdes
In \cite{NR} the authors compute lattice cohomologies of certain $3$-manifolds. The input consists of $\nu$ local
topological plane curve singularity types, as in our case, and a positive integer $d$. The $3$--manifold studied
in \cite{NR} is $S^3_{-d}(K)$,  the manifold obtained by a $(-d)$--surgery along
the connected sum $K$ of knots of the given plane curve singularities ($K = K_1 \# \cdots \# K_{\nu}\subset S^3$).
For motivation see subsection \ref{ss:1.2}. (However,  we do not assume here
that $(d-1)(d-2)=2\delta$.)

$S^3_{-d}(K)$ is a plumbed 3--manifold represented by  a negative definite plumbing graph. For such
plumbing manifolds, the lattice considered in the above construction is freely generated by the
vertices of the graph. On the other hand, for each ${\rm Spin}^c$--structure one defines a
weight function. In the present case, $H_1(S^3_{-d}(K))=\Z_d$, hence one has $d$ different
 ${\rm Spin}^c$--structures, parametrized by $a\in\{0, \ldots, d-1\}$. For each $a$
 one defines a weight function $w_a$, hence a lattice cohomology $\mathbb{H}^*(w_a)$.
 It turns out that the cohomology is independent of the plumbing representation; it depends only on
 $S^3_{-d}(K)$ and $a$.  We denote it by $\mathbb{H}^*(S^3_{-d}(K),a)$.

However, in practice, the lattice cohomologies are not computed by the definition presented above,
but by a powerful general machinery,  called `lattice reduction' (see
\cite{NR} or \cite{LN}). This allows to express the cohomology modules in a lattice (in fact, in a
`rectangle') of rank $\nu$
directly from the semigroups (or counting functions) of the given local topological singularity types.


The main result of this section is the following theorem.

\begin{theorem}\label{rem:generalformula}
Assume that $K=\#_iK_i$, where $\{K_i\}_i$ are algebraic knots.
Define functions $H$ and $F$ as in previous sections. Assume that $d$ is any positive integer. Then
\begin{equation}\label{eq:generalh0}
{\rm eu\, } \mathbb{H}^0\left(S^3_{-d}(K),a\right) = \sum_{\substack{j\equiv a ({\rm mod\ } d)
\\ 0\leq j \leq 2\delta-2}} \left( H(j+1) + \delta-1-j \right),
\end{equation}
\begin{equation}\label{eq:generalhq}
{\rm eu\, } \mathbb{H}^{\ast}\left(S^3_{-d}(K),a\right) = \sum_{\substack{j \equiv a ({\rm mod\ } d)
\\ 0\leq j \leq 2\delta-2}} \left( F(j) + \delta-1-j \right).
\end{equation}
\end{theorem}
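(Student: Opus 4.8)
The plan is to push everything through the lattice reduction of \cite{NR}, which replaces the a priori huge plumbing lattice of $S^3_{-d}(K)$ by an explicit weight function $\chi_a$ on a rank-$\nu$ rectangle $R\subset\Z^\nu$, built directly from the semigroup counting functions $H_i$ together with the surgery data $(d,a)$. The first task is to record the precise shape of $\chi_a$: it should split as a ``diagonal'' surgery contribution depending only on the total coordinate $m=\ell_1+\cdots+\ell_\nu$ (encoding the $(-d)$--framing and the $\mathrm{Spin}^c$--class $a$) plus a sum over $i$ of per-coordinate terms governed by $\Gamma_i$, i.e.\ by $H_i(\ell_i)$. I would set up the bookkeeping so that the anti-diagonals $\{m=j+1\}$ are indexed by $j$, and so that the $\mathrm{Spin}^c$--structure $a$ becomes exactly the congruence condition $j\equiv a\ (\mathrm{mod}\ d)$ appearing on the right-hand sides of \eqref{eq:generalh0} and \eqref{eq:generalhq}.

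For the full Euler characteristic \eqref{eq:generalhq} I would argue that $\mathrm{eu}\,\mathbb{H}^\ast$, being an Euler characteristic, is insensitive to the fine homotopy type of the sublevel sets and collapses to a pure coefficient count. Concretely, the reduction of \cite{NR} together with the surgery formula identifies $\mathrm{eu}\,\mathbb{H}^\ast(S^3_{-d}(K),a)$ with the $\mathrm{Spin}^c$--$a$ part of the relevant Seiberg--Witten/periodic-constant count, which by construction is read off from the product $\Delta=\prod_i\Delta_i$; this is precisely the averaging over $d$-th roots of unity already visible in \eqref{eq:R} and \eqref{eq:sw}. Extracting the class $j\equiv a$ yields $\sum_{j\equiv a}q_j$, and I would convert this to the stated form using \eqref{eq:QF} and the symmetry \eqref{eq:symQ}: since $F(j)=q_{2\delta-2-j}=q_j+j+1-\delta$, one has $q_j=F(j)+\delta-1-j$, which is exactly the summand of \eqref{eq:generalhq}.

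The genuinely new point is \eqref{eq:generalh0}, the zeroth cohomology, where only the connectivity of the sublevel sets $S_n=\{\chi_a\le n\}$ matters. Because $\chi_a$ is additive over the coordinates up to the diagonal term, its restriction to each anti-diagonal $\{m=j+1\}$ is minimized exactly by the minimum convolution, so that $\min_{\{m=j+1\}}\chi_a$ equals $H(j+1)$ up to the diagonal normalization. I would then show that the $\mathbb{H}^0$ of the rank-$\nu$ complex is governed entirely by this one-dimensional ``shadow'' $m\mapsto\min_{\{m=j+1\}}\chi_a$: two points on a common anti-diagonal that realize the minimum get joined inside $S_n$ once $n$ reaches that minimal level, so the connected components of $S_n$ are counted by the one-variable staircase. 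Running the rank-one Euler-characteristic computation for this staircase and recombining $-\min\chi_a$ with $\mathrm{rank}\,\mathbb{H}^0_{\mathrm{red}}$ telescopes into one contribution $H(j+1)+\delta-1-j$ per anti-diagonal in the class $j\equiv a$; by Lemma \ref{lem:symH} this summand equals $H(2\delta-1-j)$, the symmetric form the reduced weight naturally produces.

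I expect the main obstacle to be this last reduction for $\mathbb{H}^0$: proving rigorously that the connectivity of the $\nu$-dimensional sublevel sets is faithfully captured by the one-dimensional minimum function $H$, and that no spurious local minima survive on the anti-diagonals that would corrupt the per-level count. Controlling this requires the precise monotonicity/convexity properties of the $H_i$ (equivalently, the semigroup structure) to guarantee that minimizers on adjacent anti-diagonals are joined by admissible lattice paths; the higher cohomology $\mathbb{H}^{\ge 1}$, which is what separates $F$ from $H$ and can make $F(k)>H(k+1)$ for $\nu\ge 3$, must be shown to be irrelevant to the $\mathbb{H}^0$ bookkeeping. The Euler-characteristic identity \eqref{eq:generalhq}, by contrast, I expect to be essentially formal once the reduction and the coefficient identities \eqref{eq:QF} and \eqref{eq:symQ} are in place.
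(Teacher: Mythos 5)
Your plan follows the same skeleton as the paper's proof: everything is reduced to the weight function $W(\mathbf{x})=\delta-|\mathbf{x}|+\sum_iH_i(x_i)$ on the rectangle $[0,m_1]\times\cdots\times[0,m_\nu]$, decomposed along the anti-diagonals $T_j=\{|\mathbf{x}|=j+1\}$, with the congruence $j\equiv a\ ({\rm mod}\ d)$ selecting the ${\rm Spin}^c$--structure. However, the two steps you flag as obstacles are exactly the places where the paper proves nothing new and instead quotes \cite{NR}. The localization of $\mathbb{H}^0$ to the anti-diagonal minima --- your ``main obstacle,'' i.e.\ that the connectivity of the $\nu$--dimensional sublevel sets is faithfully recorded by $j\mapsto\min W|_{T_j}$ --- is formula (6.1.15) of \cite{NR}, recalled here as \eqref{eq:NR-1}; granting it, only the one-line computation $\min W|_{T_j}=\delta-j-1+H(j+1)$ and its vanishing for $j>2\delta-2$ (via Lemma \ref{lem:symH}) remain, as you indicate. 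For \eqref{eq:generalhq} the paper does not argue through Seiberg--Witten averaging over roots of unity: it isolates a single anti-diagonal by applying \eqref{eq:NR0} with an auxiliary surgery coefficient $D>M=m_1+\cdots+m_\nu$ and ${\rm Spin}^c$--class $a=j$, so that the residue class contains only $T_j$, and then invokes \cite[Prop.~7.1.3]{NR} to conclude $q_j=-{\rm eu}\,\mathbb{H}^{\ast}(T_j,W)$; your coefficient conversion $q_j=F(j)+\delta-1-j$ via \eqref{eq:QF} and \eqref{eq:symQ} then coincides with the paper's. Your alternative route through the normalized Seiberg--Witten invariant and the surgery formula is viable in principle (it is the content of \cite{NJEMS} and \cite{NB}), but it requires the per-${\rm Spin}^c$ analogue of \eqref{eq:R}, which you do not write down. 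In short, the strategy is the right one and no step would fail, but as written the argument is incomplete precisely at the two technical inputs that the paper resolves by citation; a complete proof must either import (6.1.15), (6.1.16) and Prop.~7.1.3 of \cite{NR} or reprove them.
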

\begin{proof}
We will recall several needed statements from \cite{NR}.

Let $f_i$ be the local equation of $(C,P_i)$, and let
 $m_i$ be the multiplicity along the unique $(-1)$--irreducible exceptional
 divisor of the pull back of $f_i$
in  the minimal good embedded resolution of $(C,P_i)$. It is a topological  invariant, and $m_i >2 \delta_i$.

We consider the lattice points  in the rank--$\nu$ multirectangle
$R:= [0,m_1] \times \dots \times [0, m_{\nu}]$.
We denote them  by  ${\bf x}=(x_1,\ldots, x_\nu)$, and we also write $|{\bf x}|:=\sum_{i=1}^\nu x_i$.

For any $a$ (with $0\leq a\leq d-1$) we set the
weight function on $R$ by
\[ w_a(\mathbf{x})=  \sum_{i=1}^{\nu} H_i(x_i) + \textrm{min}\{0, 1 +a - |{\bf x}|\}. \]
It is convenient to define another weight function too, which is independent of $d$ and $a$:
\[ W(\mathbf{x}) = \sum_{i=1}^{\nu}\#\{ s \notin \Gamma_i : s \geq x_i \}
= \sum_{i=1}^{\nu}\left( \delta_i - x_i + H_i(x_i) \right)= \delta-|{\bf x}|+
\sum_{i=1}^{\nu}H_i(x_i). \]
For any $j\geq 0$ denote the `diagonal hyperplanes' of the multirectangle by
\[ T_j := \{ {\bf x}\in R \, :\,  |{\bf x}| = j + 1 \}. \]
Note that $T_j=\emptyset$ whenever   $j>M := m_1 + \dots + m_{\nu}$.

To formulate the next result, we
define lattice cohomologies on the `diagonal' sets $T_j$ as well, considering the cohomologies of the intersection of simplicial level sets of the lattice rectangle and the $(\nu-1)$-dimenisonal hyperplane of $T_j$, i.e. $\mathbb{H}_{red}^q(T_j,W):=\oplus _{n\geq \min(W)} \tilde{H}^q(S_n \cap T_j,\Z)$ (and similarly for the non-reduced version; cf. \cite[(6.1.10)]{NR}), where the simplicial complex (level set) $S_n$ is the union of all cubes $\square$ with $W(\square) \leq n$.

\begin{thm} \emph{(\cite{NR}, formulae (6.1.15) and  (6.1.16))} \label{thm_sis_eu}
For any $d>0$ one has:
\begin{equation}\label{eq:NR-1} {\rm eu\ } \mathbb{H}^0\left(S^3_{-d}(K),a\right) = \sum_{\substack{j \equiv a ({\rm mod\ } d),\\
 0\leq j\leq M }} \min W\lvert_{T_j}, \end{equation}


 \begin{equation}\label{eq:NR0}
{\rm eu\ } \mathbb{H}^{\ast}\left(S^3_{-d}(K),a\right) = - \sum_{\substack{j \equiv a ({\rm mod\ } d), \\
0\leq j\leq M}} {\rm eu\ } \mathbb{H}^{\ast}\left(T_j, W\right). \end{equation}
\end{thm}
Clearly $\min W|_{T_j}=\delta-j-1+H(j+1)$, which equals
$H(2\delta -1-j)$ by Lemma \ref{lem:symH}, thus it is zero for $j\not\leq 2\delta-2$.
Hence the identity (\ref{eq:generalh0}) follows.

Next, fix some $j\geq 0$, and apply Theorem \ref{thm_sis_eu} for an auxiliary large
 $D>M$ (substituted for $d$),  and for $a=j$.
By  \cite[Prop. 5.3.4, Cor. 5.3.7, Thm. 6.1.6 e)]{NR}, for such $D > M$, one has
\begin{equation}\label{eq:NR1}
 \mathbb{H}^{\ast}(S^3_{-D}(K), j) \cong \mathbb{H}^{\ast}([0,m_1] \times \dots \times [0, m_{\nu}], w_j).
\end{equation}
Moreover, by  \cite[Prop. 7.1.3]{NR}, for $D>M$ the normalized Euler characteristic of this
cohomology can be compared with the coefficients of the polynomial $Q$. Namely,
\begin{equation}\label{eq:NR2}
 q_j = {\rm eu\ }\mathbb{H}^{\ast}([0,m_1] \times \dots \times [0, m_{\nu}], w_j).
\end{equation}
Then (\ref{eq:NR0}), (\ref{eq:NR1}) and (\ref{eq:NR2}) combined give
$q_j= - {\rm eu\ }\mathbb{H}^{\ast}\left(T_j, W\right)$ for any $j$.
Notice that $q_j=0$ if $j$ is not in the interval $[0,2\delta-2]$, and for these values by (\ref{eq:symQ}) one also has
$q_j=q_{2\delta-2-j}+\delta-j-1$, which equals $F(j)+\delta-j-1$ by (\ref{eq:QF}). Hence (\ref{eq:NR0})
(now applied with the original $d$) implies (\ref{eq:generalhq}).
\end{proof}

\begin{remark} \label{generalh0complete}
 In fact, the integer $d$, the sum of delta-invariants $\delta$ and the function $H$ completely determine the whole $\mathbb{H}^0$ as a graded $\mathbb{Z}[U]$-module (and not just its Euler characteristic). For this fact, we refer to \cite[Lemma 6.1.1, Theroem 6.1.6]{NR}.
\end{remark}


\begin{corollary}\label{cor:euformula} Assume that $d(d-3)=2\delta-2$,  cf. (\ref{eq:deltasum}). Then
 \[ {\rm eu\, } \mathbb{H}^0\left(S^3_{-d}(K),a\right) = \sum_{\substack{j \equiv -a (\textrm{mod\ } d)
 \\ 0\leq j \leq 2\delta-2}} H(j+1), \]
 \[ {\rm eu\, } \mathbb{H}^{\ast}\left(S^3_{-d}(K),a\right) = \sum_{\substack{j \equiv -a (\textrm{mod\ }
  d)\\ 0\leq j \leq 2\delta-2}} F(j). \]
The value $a = 0$ corresponds to the {\it canonical} {\rm Spin}$^c$--structure. Denote the corresponding lattice cohomology
of $S^3_{-d}(K)$ by  $\mathbb{H}^{\ast}_{\textrm{can}}(S^3_{-d}(K))$. Then the above identities read as:
\begin{equation}\label{eq:h0formula}
{\rm eu\, } \mathbb{H}^0_{{\rm can}}\left(S^3_{-d}(K)\right) =
\sum_{ 0\leq j \leq d-3}  H(jd+1),
\end{equation}
\begin{equation}\label{eq:hastformula}
{\rm eu\, } \mathbb{H}^{\ast}_{{\rm can}}\left(S^3_{-d}(K)\right) =
\sum_{ 0\leq j \leq d-3} F(jd).
\end{equation}
\end{corollary}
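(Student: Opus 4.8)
The plan is to derive Corollary \ref{cor:euformula} directly from Theorem \ref{rem:generalformula} by specializing the degree condition and then reindexing the sums. First I would invoke the hypothesis $d(d-3)=2\delta-2$, which is exactly the constraint (\ref{eq:deltasum}) guaranteeing that the candidate types could sit on a degree--$d$ curve. Under this hypothesis the summation range $0\leq j\leq 2\delta-2$ in (\ref{eq:generalh0}) and (\ref{eq:generalhq}) becomes $0\leq j\leq d(d-3)$, so the residues $j\equiv a\pmod d$ that fall in range are precisely $j\in\{a,\,a+d,\,\ldots\}$ up to $d(d-3)$.

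The key observation is that the correction term $\delta-1-j$ appearing in both formulae sums to zero over each congruence class. Concretely, I would use the symmetry $q_{2\delta-2-j}=q_j+j+1-\delta$ from (\ref{eq:symQ}) (equivalently Lemma \ref{lem:symH} for $H$), which says the map $j\mapsto 2\delta-2-j$ is an involution on the index set $\{0,\ldots,2\delta-2\}$ that \emph{negates} the quantity $\delta-1-j$. Since $2\delta-2=d(d-3)\equiv 0\pmod d$, this involution preserves each residue class modulo $d$: if $j\equiv a$ then $2\delta-2-j\equiv -a\equiv a'\pmod d$ for the complementary class $a'=(-a\bmod d)$. Pairing $j$ with $2\delta-2-j$ and averaging, the terms $\delta-1-j$ cancel in pairs (the fixed point, if any, contributes $\delta-1-j=0$ automatically). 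This is why the class index flips from $a$ to $-a$ in the statement: the clean cancellation of the linear correction is what forces the reindexing. After cancellation, $\sum_{j\equiv a}(H(j+1)+\delta-1-j)$ collapses to $\sum_{j\equiv -a}H(j+1)$, and identically for $F$, giving the two displayed identities of the corollary.

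Finally I would treat the canonical case $a=0$. Here $-a\equiv 0\pmod d$, so the relevant indices are $j=0,d,2d,\ldots$; writing $j=jd$ (abusing the index name as the paper does) the bound $jd\leq d(d-3)$ reads $0\leq j\leq d-3$, which yields (\ref{eq:h0formula}) and (\ref{eq:hastformula}) verbatim. The identification of $a=0$ with the canonical ${\rm Spin}^c$--structure is a normalization already fixed in the setup of subsection \ref{ss:1.2} and in \cite{NR}, so I would simply cite it rather than reprove it.

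I expect the only genuinely delicate point to be the bookkeeping of the involution on residue classes: one must check that $j\mapsto 2\delta-2-j$ really does send the class $a$ \emph{bijectively} onto the class $-a$ within the truncated range (including the boundary indices $j=0$ and $j=2\delta-2$, which map to each other and lie in the classes $0$ and $0$ respectively since $2\delta-2\equiv 0$), and that no index is double-counted or omitted when the two classes coincide (as happens for $a=0$ and, when $d$ is even, for $a=d/2$). Once this pairing is verified the cancellation is immediate and the rest is a direct substitution, so the bulk of the argument is routine reindexing rather than new mathematics.
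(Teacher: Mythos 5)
Your argument is in substance the paper's own: the proof given there is literally ``use the symmetry properties (\ref{eq:symQ}) and (\ref{lem:symH})'', and your reindexing $j\mapsto 2\delta-2-j$, which by Lemma \ref{lem:symH} turns $H(j+1)+\delta-1-j$ into $H\bigl((2\delta-2-j)+1\bigr)$ (and by (\ref{eq:symQ}) together with (\ref{eq:QF}) turns $F(j)+\delta-1-j$ into $F(2\delta-2-j)$) while carrying the class $a$ bijectively onto the class $-a$, is exactly that. One sentence of your write-up is false, however, and you should delete it: the claim that ``the correction term $\delta-1-j$ sums to zero over each congruence class''. Over the class $j\equiv a$ in $[0,2\delta-2]$ this sum equals $(d-3)(d/2-a)$ for $a\neq 0$ (e.g.\ $d=4$, $a=1$ gives the single term $\delta-1-1=1$), so it vanishes only when $a\equiv-a\pmod d$. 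Nothing is lost, because your derivation never actually uses this: the involution pairs the class $a$ with the \emph{different} class $-a$, which is precisely why the statement reads $\sum_{j\equiv -a}$ rather than $\sum_{j\equiv a}$ --- if the correction really vanished classwise there would be no flip. With that sentence removed (and the phrase ``preserves each residue class'' corrected to ``permutes the residue classes, sending $a$ to $-a$''), the rest --- the bijectivity check on the truncated range and the specialization $a=0$, $j=0,d,\dots,d(d-3)$ --- is correct and routine.
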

\begin{proof}
Use the symmetry properties (\ref{eq:symQ}) and (\ref{lem:symH}). \end{proof}

\subsection{Reformulations of Theorem \ref{thmbl} and Conjecture \ref{conj:blmn}} \

Because of the inequalities (implied by B\'ezout's theorem) of Lemma \ref{lem:Bezout},
Theorem \ref{thmbl} of Borodzik and Livingston is true if and only if the corresponding sums (over $j$) are equal.
This combined vith (\ref{eq:h0formula}) provides the following equivalent form.

\begin{theorem} {\bf (Alternative form of Theorem \ref{thmbl})}\label{thmblalt}
For a link $L=S^3_{-d}(K)$ of a superisolated surface singularity corresponding to a rational cuspidal projective plane
curve of degree $d$ we have:
\[ {\rm eu\ } \mathbb{H}^0_{{\rm can}}\left(S^3_{-d}(K)\right) = d(d-1)(d-2)/6. \]

\end{theorem}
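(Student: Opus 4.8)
The plan is to combine the Euler-characteristic computation of Corollary \ref{cor:euformula} with the equality theorem of Borodzik and Livingston, and then to evaluate a closed-form sum of triangular numbers; the entire content is carried by the already-established results, so I expect no genuine obstacle beyond an elementary summation.

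First I would invoke equation (\ref{eq:h0formula}) of Corollary \ref{cor:euformula}, which, under the standing hypothesis $d(d-3)=2\delta-2$ coming from the existence of the curve, expresses the quantity of interest purely in terms of the minimum-convolution counting function $H$ evaluated along the arithmetic progression $jd+1$:
\[ {\rm eu}\, \mathbb{H}^0_{\rm can}\left(S^3_{-d}(K)\right) = \sum_{j=0}^{d-3} H(jd+1). \]
Since $L$ is by hypothesis the link of the superisolated singularity attached to an \emph{existing} rational cuspidal curve of degree $d$, the hypotheses of Theorem \ref{thmbl} are satisfied, so I may substitute the Borodzik--Livingston identity (\ref{eq:bor_liv}), namely $H(jd+1) = (j+1)(j+2)/2$ for every $j$ with $0\le j\le d-3$.

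Second, I would evaluate the resulting sum of triangular numbers. Setting $k=j+1$, it becomes $\sum_{k=1}^{d-2}\binom{k+1}{2}$, which telescopes by the hockey-stick identity to $\binom{d}{3}=d(d-1)(d-2)/6$, yielding the asserted value. This is the single line of actual computation, and I would present only it rather than expanding the binomials by hand.

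Finally, to justify the word \emph{equivalent} in the statement (and to match the sentence preceding it), I would record the converse direction explicitly. Lemma \ref{lem:Bezout} supplies the termwise B\'ezout inequalities $H(jd+1)\ge (j+1)(j+2)/2$; hence if ${\rm eu}\,\mathbb{H}^0_{\rm can}(S^3_{-d}(K))$ equals $d(d-1)(d-2)/6=\sum_{j=0}^{d-3}(j+1)(j+2)/2$, then the sum of the nonnegative differences $H(jd+1)-(j+1)(j+2)/2$ vanishes, forcing each term to vanish and thereby recovering the termwise equalities of Theorem \ref{thmbl}. Thus the displayed Euler-characteristic identity and Theorem \ref{thmbl} are indeed equivalent, and there is no hard step: the only nontrivial inputs are Corollary \ref{cor:euformula} and Theorem \ref{thmbl}, both available, together with the elementary summation above.
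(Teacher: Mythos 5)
Your proposal is correct and follows essentially the same route as the paper: the text preceding the theorem derives it by combining equation (\ref{eq:h0formula}) with Theorem \ref{thmbl}, and obtains the equivalence from the termwise B\'ezout inequalities of Lemma \ref{lem:Bezout} exactly as you do. Your only addition is writing out the triangular-number summation explicitly, which the paper leaves implicit.
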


This form is also present in the recent article \cite{NS} (in Example 2.4.3 (a) and Section 3).

Next, using (\ref{eq:hastformula}) we give an equivalent formulation of Conjecture \ref{conj:weak} in terms of lattice cohomology (cf. Conj. \ref{conj:weakalt}).

\begin{conjecture}\label{conj:weakalt2}  {\bf (Index theoretical version, second  alternative form)} \

For a link $L=S^3_{-d}(K)$ of a superisolated surface singularity corresponding to a
rational cuspidal projective plane curve of degree $d$ we have:
\[ {\rm eu\ } \mathbb{H}^{\ast}_{{\rm can}}\left(L\right) \leq \frac{d(d-1)(d-2)}{6}. \]
Alternatively, in the light of the previous theorem:
\[ {\rm eu\ } \mathbb{H}^{\ast}_{{\rm can}}\left(L\right) \leq{\rm eu\ } \mathbb{H}^0_{{\rm can}}\left(L\right).\]
\end{conjecture}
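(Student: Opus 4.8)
The plan is to reduce the inequality to a slice-by-slice statement about the reduced lattice cohomology of the diagonal sets $T_j$, and then to argue separately according to the number of cusps $\nu$.

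First, by Corollary~\ref{cor:euformula} the two sides are
\[ {\rm eu}\,\mathbb{H}^{0}_{\rm can}(L)=\sum_{0\leq j\leq d-3}H(jd+1),\qquad {\rm eu}\,\mathbb{H}^{\ast}_{\rm can}(L)=\sum_{0\leq j\leq d-3}F(jd), \]
so the conjecture is exactly $\sum_{0\le j\le d-3}F(jd)\le\sum_{0\le j\le d-3}H(jd+1)$. To control the gap, I would combine the ingredients of the proof of Theorem~\ref{rem:generalformula}: on each slice $\min W|_{T_j}=\delta-j-1+H(j+1)$, the definition of the normalized Euler characteristic gives ${\rm eu}\,\mathbb{H}^{\ast}(T_j,W)=-\min W|_{T_j}+\sum_{q\ge0}(-1)^q\,{\rm rank}_{\Z}\,\mathbb{H}^q_{{\rm red}}(T_j,W)$, and $(\ref{eq:NR0})$ together with $q_j=-{\rm eu}\,\mathbb{H}^{\ast}(T_j,W)$ gives $F(j)=-{\rm eu}\,\mathbb{H}^{\ast}(T_j,W)-\delta+j+1$. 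These collapse to the clean pointwise identity
\[ H(j+1)-F(j)=\sum_{q\ge0}(-1)^q\,{\rm rank}_{\Z}\,\mathbb{H}^q_{{\rm red}}(T_j,W). \]
Summing this at $j=0,d,\dots,(d-3)d$ turns the conjecture into the statement that the alternating sum of reduced Betti numbers, restricted to the slices $T_{jd}$, is nonnegative.

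For $\nu\le2$ this is automatic, and in fact the stronger pointwise bound holds. Here each slice $T_j$ is contained in the affine line (a single point if $\nu=1$) $\{|{\bf x}|=j+1\}$, so every level set $S_n\cap T_j$ is a disjoint union of intervals and carries no $1$--cycles; hence $\mathbb{H}^q_{{\rm red}}(T_j,W)=0$ for $q\ge1$, and the identity above reduces to $H(j+1)-F(j)={\rm rank}_{\Z}\,\mathbb{H}^0_{{\rm red}}(T_j,W)\ge0$ for every $j$. This proves $F(k)\le H(k+1)$ pointwise (the claim announced in \ref{ss:3.4}), with equality throughout when $\nu=1$, and a fortiori the conjecture for $\nu\le2$.

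For $\nu\ge3$ the slices are genuinely $(\nu-1)$--dimensional, the group $\mathbb{H}^1_{{\rm red}}(T_j,W)$ need not vanish, and the signed sum over $q$ has no a priori sign; indeed Examples~\ref{ex:first222} and \ref{ex:counter} show that $F(k)\le H(k+1)$ can fail outright. For these cases I would verify the single summed inequality by direct computation: run over every cusp configuration in the current classification of rational cuspidal plane curves, read off the semigroup counting functions $H_i$, form $H=H_1\diamond\cdots\diamond H_\nu$ and $F$, and check $\sum_j F(jd)\le\sum_j H(jd+1)$, using the Borodzik--Livingston equality (Theorem~\ref{thmblalt}) to evaluate the right-hand side as $d(d-1)(d-2)/6$. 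This is where the main obstacle sits: beyond the dimension bound that trivialised $\nu\le2$, I see no structural monotonicity forcing the alternating Betti sum \emph{restricted to the progression $j\equiv0\ (\mathrm{mod}\ d)$} to stay nonnegative. Consequently the argument is unconditional only over the finite, classification--dependent list of known curves; a classification--free proof would require a genuinely new positivity property of $\mathbb{H}^{\ast}_{\rm can}(S^3_{-d}(K))$ that the present combinatorial framework does not supply.
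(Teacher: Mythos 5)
Your proposal is correct and follows essentially the paper's own route: the statement remains a conjecture in general, it is proved for $\nu\le 2$ via the vanishing of lattice cohomology in degrees $\ge\nu$ (your slice-level identity $H(j+1)-F(j)=\sum_q(-1)^q\,\mathrm{rank}_{\Z}\,\mathbb{H}^q_{\mathrm{red}}(T_j,W)$ is the same mechanism the paper implements in \ref{ss:3.3}--\ref{ss:3.4} by passing to an auxiliary large surgery coefficient $D$), and for $\nu\ge 3$ it is only verified over the known classification, exactly as in Section \ref{s:Verify}. The one execution detail worth noting is that the $\nu\ge3$ check involves three infinite families, so it requires closed-form parametric computations (the paper uses the explicit Alexander polynomial products and $R(1)$) rather than a finite enumeration.
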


In this context, the weakened Conjecture \ref{conj:weakalt} is much more natural than the original
(and in general surely false) Conjecture \ref{conj:blmn} which would require the validity of $F(jd) \leq (j+1)(j+2)/2$
for every single $j = 0, 1, \dots, d-3$, i.e. an inequality for the lattice cohomological Euler characteristic of
each diagonal set $T_{jd}$.

\begin{remark}
In \cite{NJEMS} is proved that ${\rm eu\, } \mathbb{H}^{\ast}\left(S^3_{-d}(K),a\right) $
equals the normalized Seiberg--Witten invariant of $S^3_{-d}(K)$ associated with
the Spin$^c$--structure $a$. In particular, for the canonical Spin$^c$--structure, one has
$ {\rm eu\, } \mathbb{H}^{\ast}_{{\rm can}}\left(S^3_{-d}(K)\right)=
-\frsw_{\textrm{can}}(S^3_{-d}(K))-(K_{\textrm{can}}^2+s_{\widetilde{X}})/8$
(compatibly with the first version of Conjecture \ref{conj:weak}).
\end{remark}

\begin{remark}
 We wish to emphasize that it is essential that in the above conjecture we talk about the lattice cohomologies corresponding to the \emph{canonical} Spin$^c$--structure only. Using formulae of Corollary \ref{cor:euformula} one can check easily that for superisolated singularity link $L$ coming from the (existing) curve of Example \ref{ex:counter} choosing Spin$^c$--structure corresponding to $a = 4$ we have $\textrm{eu\ } \mathbb{H}^{\ast}(L,a=4) = 45 > \textrm{eu\ }\mathbb{H}^{0}(L,a=4) = 42$.
 Also, for many curves from series (1) in Proposition \ref{prop:tricuspidal} of the next section, one can find Spin$^c$--structures for which the inequality fails, e.g. $\textrm{eu\ } \mathbb{H}^{\ast}(L(C_{4,1}),a=2) = 3 > \textrm{eu\ }\mathbb{H}^{0}(L(C_{4,1}),a=2) = 2$ (cf. Example \ref{ex:first222}).
\end{remark}

\begin{remark}
 It is well-known that the link $L$ of a superisolated singularity corresponding to a projective plane curve is a rational homology sphere ($\mathbb{Q}HS^3$) if and only if the curve is rational and cuspidal.
 (See 7.1 in \cite{BLMN2} and the references therein.)

 The fact that $L$ is $\mathbb{Q}HS^3$ is probably also essential in the above conjecture. To see this, consider the following example. It is not hard to see by a construction using Cremona-transformations that there exists a rational projective curve $C$ of degree $d = 5$ with three singular points which are of the following type:
 One singularity is a simple transversal self-intersection (a reducible $A_1$-singularity). The other two are locally irreducible singularities, with multiplicity sequences $[3,2]$, resp. $[2]$ (alternatively, with Newton pairs $(3,5)$, resp. $(2,3)$). From the embedded resolution graphs of plane curve singularities, it is easy to construct the plumbing graph of the link $L$ of the corresponding superisolated singularity. (Due to the locally reducible singularity, it has one cycle, so it is not a tree.) One checks (presumably with the help of computer) that for this link $\textrm{eu\ } \mathbb{H}^{\ast}_{\textrm{can}}\left(L\right) = 11 > \textrm{eu\ } \mathbb{H}^0_{\textrm{can}}\left(L\right) = 10$.
 (Note that although $L$ is not a rational homology sphere, we can still speak about
 the corresponding lattice cohomologies with the same definition as in the $\mathbb{Q}HS^3$ case.)
\end{remark}

\subsection{Proof of Conjecture \ref{conj:weak} (index theoretical version, second alternative form \ref{conj:weakalt2}) for $\nu=2$.}
\label{ss:3.3}\

First we recall that ${\mathbb H}^q(S^3_{-d}(K),a)=0$ for any $q\geq \nu$. This follows
from the fact that the non-compact simplicial subcomplexes
$S_n$ of $\R^\nu$ (in the reduced lattices) have no nonzero homologies $H^q(S_n,\Z)$ for $q\geq \nu$;
or just apply  \cite{LN} or
\cite[6.2.1]{NT}.
Then, for $\nu=2$, we have ${\rm eu\ } {\mathbb H}^{\ast}(S^3_{-d}(K),a)= {\rm eu\ } {\mathbb H}^{0}(S^3_{-d}(K),a)-{\rm rank}_\Z\, {\mathbb H}^{1}(S^3_{-d}(K),a)$, hence the second alternative form transforms into ${\rm rank}_\Z\, {\mathbb H}^1_{{\rm can}}(S^3_{-d}(K))\geq 0$,
which is certainly true.

Notice also that for $\nu\geq 3$ similar argument does not work (and from this point of view,
it is even more surprising that in all the known cases,
the conjecture holds, cf. section \ref{s:Verify}).

\subsection{Proof of Conjecture \ref{conj:blmn} (alternative form \ref{conj:firstref}) for $\nu=2$.}
\label{ss:3.4}\

In fact, essentially by the same argument, in case of $\nu = 2$ one can prove the original, stronger Conjecture \ref{conj:firstref} as well. Using formula (\ref{eq:bor_liv}) of Theorem \ref{thmbl} and comparing it with (\ref{eq:u}), it is enough to prove that for $\nu = 2$ the inequality $F(k) \leq H(k+1)$ holds for \emph{any} $0 \leq k \leq 2\delta-2$. This inequality is purely combinatorial, completely independent of the parameter $d$, and has nothing to do with the realizability of cusp types on an existing rational projective curve (neither with the validity or failure of equalities (\ref{eq:bor_liv})).

In fact, similarly as in the proof of Theorem \ref{rem:generalformula}, set $D > 2\delta-2$. Then, by (\ref{eq:generalh0}) and (\ref{eq:generalhq}), the inequality $F(k) \leq H(k+1)$ turns into ${\rm eu\, } \mathbb{H}^{\ast}\left(S^3_{-D}(K),k\right) \leq {\rm eu\, } \mathbb{H}^0\left(S^3_{-D}(K),k\right)$ which is again true, since due to the vanishing following from the reduction principle, the difference is the only summand ${\rm rank}_\Z\, {\mathbb H}^1_{{\rm red}}(S^3_{-D}(K),k) \geq 0$.

\section{Verifying conjecture \ref{conj:weakalt2} for known curves with $\nu \geq 3$}\label{s:Verify}

\subsection{}

In this section we show that Conjecture \ref{conj:weakalt2} is true
for all the  rational cuspidal curves with at least three cusps currently known
(by the authors). For the list of such curves we refer to \cite[2.4.5]{Moe}, \cite[Conj. 4]{Pio}.
There are three infinite series of tricuspidal curves; one is a
two-parameter family, the other two series have one parameter each (the curve degree).

There are two `sporadic'
curves not contained in any of the three series. Both of them is of degree $5$; one is tricuspidal, the other has four cusps (this curve is conjectured to be the only rational cuspidal curve with more than three cusps).

The data for the curves with three cusps in the three infinite series are as follows (we present the multiplicity
sequences and, for the convenience, also the Newton pairs of the cusp types):

\begin{proposition}\label{prop:tricuspidal} \emph{(Flenner, Zaidenberg and Fenske; see \cite[3.5]{FZ1},
\cite[1.1]{FZ2}, \cite{Fen}, cf. also \cite[2.4.5]{Moe}, \cite{Pio})}

The following rational cuspidal curves exist:
\begin{enumerate}

	\item A curve $C_{d,u}$ (with $d \geq 4$ and $1 \leq u \leq d-3$) is of degree $d$ and has the following cusp types:
	  \subitem $[d-2]$, alternatively $(d-2,d-1)$
	  \subitem $[2_u]$, alternatively $(2, 2u+1)$
	  \subitem $[2_{d-2-u}]$, alternatively $(2, 2d-2u-3)$.
	
	  (Here it is enough to take $u \leq \left\lfloor \frac{d-2}{2}\right\rfloor$ or $\left\lceil \frac{d-2}{2}\right\rceil \leq u$,
	  as $C_{d,u} = C_{d,d-2-u}$.)
	
	\item A curve $D_{l}$ (with $l \geq 1$) is of degree $d = 2l + 3$ and has the following cusp types:
	  \subitem $[2l, 2_l]$, alternatively $(l,l+1)(2,1)$ (case $l=1$ degenerates to one Newton pair)
	  \subitem $[3_l]$, alternatively $(3,3l+1)$
	  \subitem $[2]$, alternatively $(2,3)$.
	
	\item A curve $E_{l}$ (with $l \geq 1$) is of degree $d = 3l + 4$ and has the following cusp types:
	  \subitem $[3l, 3_l]$, alternatively $(l,l+1)(3,1)$ (case $l=1$ degenerates to one Newton pair)
	  \subitem $[4_l, 2_2]$, alternatively $(2,2l+1)(2,1)$
	  \subitem $[2]$, alternatively $(2,3)$.
\end{enumerate}

\end{proposition}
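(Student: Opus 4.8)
The plan is to realize each family by an explicit rational parametrization $\varphi\colon \mathbb{P}^1\to \mathbb{C}P^2$ of degree $d$, birational onto its image $C$, whose only non-immersive points are the three preimages of the cusps; one then reads off the local topological type at each cusp from the Puiseux data of $\varphi$. Concretely such a $\varphi$ is a triple $[\,p_0:p_1:p_2\,]$ of binary forms of degree $d$ with no common factor, so the construction reduces to writing down, for each of families (1)--(3), such a triple whose orders of vanishing at the three special fibres reproduce the prescribed semigroups. (The original arguments of Flenner--Zaidenberg and Fenske instead pass through the classification of affine-ruled open surfaces; I favour the explicit route here.) The genuinely creative content is precisely to find these triples, equivalently a sequence of quadratic Cremona transformations starting from a base curve; once they are in hand the verification is essentially mechanical.

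First I would fix the dictionary between a local parametrization and the cusp invariants. If $P=\varphi(s_0)$ and, in affine coordinates centred at $P$, $\varphi(s)=(x(s),y(s))$ with $x,y\in\mathbb{C}[[s-s_0]]$, then the semigroup $\Gamma$ of the branch is generated by the orders $\mathrm{ord}_{s_0}\big(g(x(s),y(s))\big)$ as $g$ runs over germs at $P$; a single Newton pair $(p,q)$ with $p<q$, $\gcd(p,q)=1$, is produced by $x=(s-s_0)^p$, $y=(s-s_0)^q$ (giving $\Gamma=\langle p,q\rangle$), and nested Newton pairs correspond to nested Puiseux expansions. The multiplicity sequence is recovered from $(x(s),y(s))$ by the standard blow-up/Euclidean algorithm, so each claimed cusp type becomes a finite check matching the data of the Proposition: e.g. $[2_u]$ is $(2,2u+1)$, and $[3l,3_l]$ comes from the nested pairs $(l,l+1)(3,1)$.

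Next I would carry out the construction family by family. For family (1) I would place the three special fibres at $0,1,\infty$ and choose $\varphi_{d,u}$ so that its leading Puiseux exponents there are $(d-2,d-1)$, $(2,2u+1)$, $(2,2(d-2-u)+1)$; the two free parameters $(d,u)$ match the two-parameter family. For families (2) and (3), whose cusps carry two Newton pairs, it is cleaner to proceed inductively in $l$: start from the base curve ($l=1$, which is the quintic $D_1$ of the second example above, respectively the septic $E_1$) and apply a quadratic Cremona transformation centred at one cusp together with two further points of $C$. Under such a transformation the degree and the multiplicities at the base points change by the classical quadratic-Cremona formulae, and one can arrange that this raises the degree by the prescribed amount ($d\mapsto d+2$ for $D_l$, $d\mapsto d+3$ for $E_l$) and appends exactly one more block to the relevant multiplicity sequence while leaving the other two cusps untouched; tracking the multiplicities through the transformation is explicit bookkeeping.

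The step requiring genuine care is to guarantee that $C$ has \emph{no} singular point beyond the three prescribed cusps, and that each branch is \emph{exactly} the claimed analytic type rather than some other germ with the same first multiplicity. For the latter, the Puiseux computation above determines the full semigroup, hence the topological type. For the former I would argue numerically: since $C$ is the birational image of $\mathbb{P}^1$ it is rational, so its total delta invariant equals $(d-1)(d-2)/2$; as the three prescribed cusps are designed to satisfy $\sum_i\delta_i=(d-1)(d-2)/2$, that is, exactly the identity (\ref{eq:deltasum}), no delta is left over for any further singular branch, forcing $C$ to be smooth elsewhere. Confirming $\deg C=d$, via genericity and generic injectivity of $\varphi$, then completes the argument; the only subtlety to watch is that the chosen forms really have no common factor and that $\varphi$ is birational onto its image, both easy to verify case by case but not to be skipped.
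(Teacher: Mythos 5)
The paper does not actually prove this proposition: it is quoted from the literature (Flenner--Zaidenberg \cite[3.5]{FZ1}, \cite[1.1]{FZ2} and Fenske \cite{Fen}), so your attempt is being measured against the published constructions rather than against an argument in the text. Your surrounding scaffolding is sound: the dictionary between Puiseux data, Newton pairs and multiplicity sequences is standard, and the $\delta$-count is a genuinely valid way to exclude extra singularities --- one checks that $\sum_i\delta_i=(d-1)(d-2)/2$ in all three families, so once $\varphi$ is a degree-$d$ birational parametrization realizing exactly the three prescribed local types, the curve can have no further singular points.

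The genuine gap is that the existence of the parametrizations is exactly the content of the proposition, and you never exhibit them. For family (1) you say ``choose $\varphi_{d,u}$ so that its leading Puiseux exponents at $0,1,\infty$ are \dots''; but a degree-$d$ triple of binary forms with prescribed ramification data at three points is a nontrivial interpolation problem, and the whole subject exists because most numerically consistent candidates are \emph{not} realizable --- the paper itself exhibits the candidate $[3,2],[2],[2]$ in degree $5$, which passes the $\delta$-count yet corresponds to no curve. So ``design the cusps to satisfy (\ref{eq:deltasum}) and pick $\varphi$'' cannot be the proof; the existence of $\varphi$ is the theorem. Likewise for families (2) and (3): the inductive Cremona step requires choosing base points (generally infinitely near ones) with multiplicities summing to $d-2$, resp.\ $d-3$, positioned so that the transformation appends exactly one block to one multiplicity sequence, leaves the other two cusps unchanged, and creates no new singular points or base-locus artifacts. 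Dismissing this as ``explicit bookkeeping'' hides the entire construction; until the base points and the resulting multiplicity transformations are written down and verified, the induction has no content. In short, the proposal is a correct plan for how one \emph{would} verify such curves once given, but the constructive core --- the actual triples of forms or the actual Cremona data, which is what \cite{FZ1}, \cite{FZ2}, \cite{Fen} supply --- is missing.
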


\begin{theorem}

Let us denote the links of superisolated singularities corresponding to curves
$C_{d,u}$ (resp. $D_l$, $E_l$) by $L(C_{d,u})$ (resp. $L(D_l)$, $L(E_l)$). Then we have:
\begin{enumerate}
 \item $\textrm{eu\ } \mathbb{H}_{\textrm{can}}^{0}(L(C_{d,u})) - \textrm{eu\ }
\mathbb{H}^{\ast}_{\textrm{can}}(L(C_{d,u})) = \begin{cases} l(l-1) &\mbox{if }
d = 2l+1 \\ (u-l)(u-l+1) & \mbox{if } d = 2l, u \geq l-1. \end{cases} $

\vspace{2mm}

\noindent In particular, $\textrm{eu\ } \mathbb{H}_{\textrm{can}}^{0}(L(C_{d,u})) -
\textrm{eu\ }\mathbb{H}^{\ast}_{\textrm{can}}(L(C_{d,u})) \geq 0$ in all cases.

\vspace{2mm}

 \item $\textrm{eu\ } \mathbb{H}_{\textrm{can}}^{0}(L(D_l)) - \textrm{eu\ }\mathbb{H}^{\ast}_{\textrm{can}}(L(D_l)) =
       \begin{cases} 4p(3p-1)+2 & \mbox{if } l = 3p-1 \\
                     4p(3p-1) & \mbox{if } l = 3p \\
                     12p(p+1)+2 &\mbox{if } l = 3p+1
       \end{cases}$

\vspace{2mm}

  \item $\textrm{eu\ } \mathbb{H}_{\textrm{can}}^{0}(L(E_l)) - \textrm{eu\ }\mathbb{H}^{\ast}_{\textrm{can}}(L(E_l)) =
        \begin{cases} 60p^2-2p &\mbox{if } l = 4p \\
                      60p^2+46p+10 & \mbox{if } l = 4p+1 \\
                      60p^2+62p+16 & \mbox{if } l = 4p+2 \\
                      60p^2+100p+42 & \mbox{if } l = 4p+3
        \end{cases}$

\vspace{2mm}

\noindent In particular, Conjecture \ref{conj:weakalt2} is satisfied in each case.
\end{enumerate}

\end{theorem}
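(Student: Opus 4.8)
The plan is to push everything through the explicit Euler-characteristic formulae of Corollary~\ref{cor:euformula} and reduce the whole statement to the evaluation of a single scalar per curve. By \eqref{eq:h0formula} and \eqref{eq:hastformula},
\[
\textrm{eu }\mathbb{H}^0_{\textrm{can}}(L)-\textrm{eu }\mathbb{H}^{\ast}_{\textrm{can}}(L)=\sum_{0\leq j\leq d-3}\bigl(H(jd+1)-F(jd)\bigr).
\]
Each curve in Proposition~\ref{prop:tricuspidal} genuinely exists, so Theorem~\ref{thmbl} (equivalently Theorem~\ref{thmblalt}) gives $H(jd+1)=(j+1)(j+2)/2$; hence the first term sums to $d(d-1)(d-2)/6$ and the difference equals $d(d-1)(d-2)/6-\sum_{j=0}^{d-3}F(jd)$. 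Comparing \eqref{eq:R2} with \eqref{eq:QF} and using $(d-3-j)d=2\delta-2-jd$, the coefficient of $t^{(d-3-j)d}$ in $R$ is exactly $F(jd)-(j+1)(j+2)/2$, so this difference is precisely $-R(1)$. Thus the theorem is the assertion that $-R(1)$ equals the stated quadratic in the running parameter, and Conjecture~\ref{conj:weakalt2} is the assertion $-R(1)\geq 0$.

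With this reduction, I would compute $R(1)$ family by family. For each family write down the three semigroups from Proposition~\ref{prop:tricuspidal} and their Alexander polynomials $\Delta_i$ via \eqref{eq:delta}: for a one-Newton-pair cusp $\langle p,q\rangle$ one has the closed form $\Delta_i(t)=(t^{pq}-1)(t-1)/\bigl((t^p-1)(t^q-1)\bigr)$ (all three cusps of $C_{d,u}$, and the $(3,3l+1)$, $(2,3)$ cusps), while for the two-Newton-pair cusps in $D_l$ and $E_l$ one first expands the semigroup and reads off $\Delta_i$. Then $\Delta=\Delta_1\Delta_2\Delta_3$ is inserted into the closed form \eqref{eq:R}, and $R(1)$ is obtained as the limit $t\to1$ of the roots-of-unity expression. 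Equivalently, one may stay entirely on the combinatorial side, extract $F(j)=(\varSigma\varSigma c)_j$ from the coefficient sequence $c$ of $\Delta$, and sum $F(jd)$ over the progression $j=0,d,2d,\dots,(d-3)d$; the two routes agree by the reduction above. Since only the scalar $R(1)$ is needed, the bookkeeping is lighter than computing the entire polynomial $R$.

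The main obstacle is that no single uniform closed form covers a whole family: the contribution to $R(1)$ depends on \emph{which} low-order roots of unity happen to be $d$-th roots of unity, and this is governed by the running parameter modulo a small integer. Concretely, $-1$ is a $d$-th root of unity exactly when $d$ is even, which splits $C_{d,u}$ into the cases $d=2l+1$ and $d=2l$ (the latter then genuinely depending on $u$ through the $(2,\cdot)$ cusps); the primitive cube roots of unity are $d$-th roots exactly when $3\mid d=2l+3$, i.e. when $l\equiv 0\pmod 3$, which produces the three cases $l\equiv 0,\pm 1\pmod 3$ for $D_l$ (the $(3,3l+1)$ cusp supplying the relevant factor $t^3-1$); and the order-$4$ roots behave according to $l\bmod 4$ for $E_l$, yielding its four cases. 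In each fixed residue class the sum over $j$ (or the finite sum of partial-fraction residues of \eqref{eq:R} at the $d$-th roots of unity) collapses to an arithmetic-geometric sum that simplifies to the stated quadratic polynomial in $p$. I expect the heaviest bookkeeping to be in $D_l$ and $E_l$, precisely because their first (and, for $E_l$, second) cusps carry two Newton pairs and hence the most intricate semigroups and Alexander polynomials.

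Finally, once the closed forms are in hand, the conclusion is immediate: every expression in the three cases is manifestly non-negative for the relevant parameter range (e.g. $l(l-1)\geq 0$, $(u-l)(u-l+1)\geq 0$, and the displayed polynomials in $p$ are positive), so Conjecture~\ref{conj:weakalt2} holds for $C_{d,u}$, $D_l$ and $E_l$; the two sporadic quintics referred to in the introduction of this section are then disposed of by the same computation applied to a single curve each.
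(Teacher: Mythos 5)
Your proposal is correct and follows essentially the same route as the paper: reduce the difference $\textrm{eu\ }\mathbb{H}^0_{\textrm{can}}-\textrm{eu\ }\mathbb{H}^{\ast}_{\textrm{can}}$ to $-R(1)$ (using Theorem \ref{thmblalt} for the $\mathbb{H}^0$ term and (\ref{eq:R2}), (\ref{eq:QF}) for the $\mathbb{H}^{\ast}$ term), then evaluate $R(1)$ from formula (\ref{eq:R}) with the explicit product $\Delta=\Delta_1\Delta_2\Delta_3$ for each family. The paper likewise omits the final residue computations; your observation that the case splitting is governed by which low-order roots of unity are $d$-th roots of unity is a correct and helpful gloss on why the answers depend on $d\bmod 2$, $l\bmod 3$ and $l\bmod 4$.
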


\begin{proof}
 Since in each case we know explicitly the three singularities, we also know explicitly the product $\Delta(t)$
of their Alexander-polynomials. Therefore, it is convenient to work with formula (\ref{eq:R}), as (\ref{eq:R2}) reads as:
 \[ R(1) = \textrm{eu\ } \mathbb{H}^{\ast}_{\textrm{can}} - \textrm{eu\ } \mathbb{H}^{0}_{\textrm{can}}. \]
 We do not give the computations here, just present the form of the polynomial $\Delta(t) = \Delta_1(t)\Delta_2(t)\Delta_3(t)$
in terms of the parameters in each case.
 \begin{enumerate}
  \item \[\Delta^{(C)}(t) = \frac{(t-1)(t^{(d-2)(d-1)}-1)}{(t^{d-2}-1)(t^{d-1}-1)} \frac{(t-1)(t^{2(2u+1)}-1)}{(t^{2}-1)(t^{2u+1}-1)}
 \frac{(t-1)(t^{2(2d-2u-3)}-1)}{(t^{2}-1)(t^{2d-2u-3}-1)} \]
  \item \[\Delta^{(D)}(t) = \frac{(t-1)(t^{2l(l+1)}-1)(t^{2+4l(l+1)}-1)}{(t^{2l}-1)(t^{2(l+1)}-1)(t^{1+2l(l+1)}-1)}
  \frac{(t-1)(t^{3(3l+1)}-1)}{(t^{3}-1)(t^{3l+1}-1)} \frac{(t-1)(t^{6}-1)}{(t^{2}-1)(t^{3}-1)} \]
  \item \[\Delta^{(E)}(t) = \frac{(t-1)(t^{3l(l+1)}-1)(t^{3+9l(l+1)}-1)}{(t^{3l}-1)(t^{3(l+1)}-1)(t^{1+3l(l+1)}-1)}
  \frac{(t-1)(t^{4(2l+1)}-1)(t^{2+8(2l+1)}-1)}{(t^{4}-1)(t^{2(2l+1)}-1)(t^{1+4(2l+1)}-1)}
  \frac{(t-1)(t^{6}-1)}{(t^{2}-1)(t^{3}-1)} \]
 \end{enumerate}

 Then, in each case, use formula (\ref{eq:R}) with the corresponding $d$ to obtain the result.
\end{proof}

\begin{remark} \label{rem:detailed}
We present a table containing the detailed data $H(jd+1) - F(jd)$, $j = 0, \dots, d-3$ for the first few members of the first series:

\vspace{2mm}

\begin{center}
\begin{tabular}{| c | c | c | c | c | c | c | c | c | c | c |} \hline
  Curve     & Degree & Cusp types & \multicolumn{7}{c|}{$H(jd+1) - F(jd)$} & eu $\mathbb{H}^{0} - $ eu $\mathbb{H}^{\ast}$ \\ \hline \hline
  $C_{4,1}$ & $d = 4$ & $[2]$, $[2]$, $[2]$ & $0$ & $0$ & & & & & & $0$ \\ \hline \hline
  $C_{5,1}$ & $d = 5$ & $[3]$, $[2_2]$, $[2]$ & $0$ & $2$ & $0$ & & & & & $2$ \\ \hline \hline
  $C_{6,1}$ & $d = 6$ & $[4]$, $[2_3]$, $[2]$ & $0$ & $0$ & $0$ & $0$ & & & & $0$ \\ \hline
  $C_{6,2}$ & $d = 6$ & $[4]$, $[2_2]$, $[2_2]$ & $0$ & $0$ & $0$ & $0$ & & &  & $0$ \\ \hline \hline
  $C_{7,1}$ & $d = 7$ & $[5]$, $[2_4]$, $[2]$ & $0$ & $3$ & $0$ & $3$ & $0$ & &  & $6$ \\ \hline
  $C_{7,2}$ & $d = 7$ & $[5]$, $[2_3]$, $[2_2]$ & $0$ & $3$ & $0$ & $3$ & $0$ & &  & $6$ \\ \hline \hline
  $C_{8,1}$ & $d = 8$ & $[6]$, $[2_5]$, $[2]$ & $0$ & $0$ & $1$ & $1$ & $0$ & $0$ &  & $2$ \\ \hline
  $C_{8,2}$ & $d = 8$ & $[6]$, $[2_4]$, $[2_2]$ & $0$ & $-1$ & $1$ & $1$ & $-1$ & $0$ &  & $0$ \\ \hline
  $C_{8,3}$ & $d = 8$ & $[6]$, $[2_3]$, $[2_3]$ & $0$ & $-1$ & $1$ & $1$ & $-1$ & $0$ &  & $0$ \\ \hline \hline
  $C_{9,1}$ & $d = 9$ & $[7]$, $[2_6]$, $[2]$ & $0$ & $3$ & $1$ & $4$ & $1$ & $3$ & $0$  & $12$ \\ \hline
  $C_{9,2}$ & $d = 9$ & $[7]$, $[2_5]$, $[2_2]$ & $0$ & $4$ & $0$ & $4$ & $0$ & $4$ & $0$  & $12$ \\ \hline
  $C_{9,3}$ & $d = 9$ & $[7]$, $[2_4]$, $[2_3]$ & $0$ & $4$ & $0$ & $4$ & $0$ & $4$ & $0$  & $12$ \\ \hline

\end{tabular}
\end{center}

\vspace{2mm}

We see that the smallest degree where the original Conjecture \ref{conj:blmn} fails is degree $8$.
The general pattern for larger $d$'s is that the conjecture fails only at even degrees and when $d - 3 > u > 1$
(so it is still true when the degree is odd or the degree is even and $u = 1$ or $u = d-3$).

Computations show that the other two series satisfy even the original Conjecture \ref{conj:blmn}.

\end{remark}

\bekezdes \label{checksporadic}
Finally, check the conjecture for the two exceptional curves. The tricuspidal one of degree $d = 5$ has cusp types $[2_2], [2_2], [2_2]$. The numerical values of functions $F$ and $H$ are as follows:

\vspace{2mm}

\begin{center}
\begin{tabular}{|c | c | c | c | c | c | c | c | c | c | c | c |} \hline
  $k$ & $0$ & $1$ & $2$ & $3$ & $4$ & $5$ & $6$ & $7$ & $8$ & $9$ & $10$ \\ \hline
  $H(k+1)$ & $1$ & $1$ & $2$ & $2$ & $3$ & $3$  & $4$  & $4$  & $5$  & $5$  & $6$ \\ \hline
  $F(k)$ & $1$ & $-1$ & $3$ & $-3$ & $6$ & $-3$  & $7$  & $-1$  & $6$  & $3$  & $6$  \\ \hline
  $H(k+1)-F(k)$ & $0$ & $2$ & $-1$ & $5$ & $-3$ & $6$  & $-3$  & $5$  & $-1$  & $2$  & $0$  \\ \hline

\end{tabular}
\end{center}

\vspace{2mm}

Hence $\textrm{eu\ }
\mathbb{H}^{0}_{\textrm{can}} - \textrm{eu\ } \mathbb{H}^{\ast}_{\textrm{can}} = 0+6+0 > 0$.

The single known rational cuspidal curve with four cusps has
degree $d = 5$. The cusp types are $[2_3], [2], [2], [2]$. The detailed data:

\vspace{2mm}

\begin{center}
\begin{tabular}{|c | c | c | c | c | c | c | c | c | c | c | c |} \hline
  $k$ & $0$ & $1$ & $2$ & $3$ & $4$ & $5$ & $6$ & $7$ & $8$ & $9$ & $10$ \\ \hline
  $H(k+1)$ & $1$ & $1$ & $2$ & $2$ & $3$ & $3$  & $4$  & $4$  & $5$  & $5$  & $6$ \\ \hline
  $F(k)$ & $1$ & $-2$ & $5$ & $-5$ & $8$ & $-5$  & $9$  & $-3$  & $8$  & $2$  & $6$  \\ \hline
  $H(k+1)-F(k)$ & $0$ & $3$ & $-3$ & $7$ & $-5$ & $8$  & $-5$  & $7$  & $-3$  & $3$  & $0$  \\ \hline

\end{tabular}
\end{center}

\vspace{2mm}

Hence $\textrm{eu\ }
\mathbb{H}^{0}_{\textrm{can}} - \textrm{eu\ } \mathbb{H}^{\ast}_{\textrm{can}} = 0+8+0 > 0$.

\section{A combinatorial surgery formula for  $\mathbb{H}^0(S^3_{-d}(K))$ and
$\textrm{eu\ }\mathbb{H}^0(S^3_{-d}(K))$}\labelpar{s:hilbertf}

\subsection{Dependence of $\mathbb{H}^0(S^3_{-d}(K))$
and $\textrm{eu\ }\mathbb{H}^0(S^3_{-d}(K))$ on the multiplicity sequences}
In this section we present an effective way to compute $\textrm{eu\ }\mathbb{H}^0(S^3_{-d}(K), a)$,
where the setting is as in Section \ref{s:Lattice} and in \cite{NR}, i.e. $K=\#_{i=1}^\nu
K_i$, $d$ is an arbitrary positive integer, and
 $a$ stands for a Spin$^c$--structure, hence  $a \in\{ 0, \dots, d-1\}$. In this discussion we prefer to
 fix the integers $d$, $a$ and $\delta$. Hence, by (\ref{eq:generalh0}),
 $\textrm{eu\ }\mathbb{H}^0(S^3_{-d}(K))$ (and, by Remark \ref{generalh0complete} the $\mathbb{Z}[U]$-module $\mathbb{H}^0(S^3_{-d}(K))$ as well) is completely determined by the minimum--convolution
 $H= H_1\diamond\cdots\diamond  H_\nu $.
 In this section we focus on the dependence of $H$ on the
 multiplicity sequences of plane curve singularities corresponding to knots $K_i$.

\bekezdes
Let $[n^{(1)}_1, \dots, n^{(1)}_{r_1}], \dots, [n^{(\nu)}_1, \dots, n^{(\nu)}_{r_{\nu}}]$
be the multiplicity sequences of the local singularities.
(We omit the 1's at the end of the multiplicity sequences, i.e. we define the multiplicity sequence
 as a sequence of multiplicities occurring in consecutive blowups resulting in \emph{smooth}
 exceptional divisors and strict transform, but not necessarily in normal crossings of exceptional divisors and
 strict transform; in particular, $n^{(i)}_{r_{i}} > 1$ for all $i = 1, \dots, \nu$.)

Then the sum of delta invariants of the singularities is
\begin{equation}\label{eq:deltamult}
\delta = \sum_{i = 1}^{\nu} \sum_{j = 1}^{r_i} \frac{n^{(i)}_j(n^{(i)}_j-1)}{2}.
\end{equation}
We will show that the minimum-convolution $H = H_1 \diamond \dots \diamond H_{\nu}$
depends only on the {\it multiset} of multiplicities
$\{ \{ n^{(1)}_1, \dots, n^{(1)}_{r_1}, \dots, n^{(\nu)}_1, \dots n^{(\nu)}_{r_{\nu}} \} \}$.
(By a multiset we mean a set, where the same element might be repeated and we keep track the number of
appearances; hence a multiset with integer entries basically is an element of the group ring $\Z[\Z]$.)

\begin{theorem}\label{thm:minconv}

 Assume we are given two collections of plane curve singularity types with their multiplicity sequences:
 \[ [n^{(1)}_1, \dots, n^{(1)}_{r_1}], \dots, [n^{(\nu)}_1, \dots, n^{(\nu)}_{r_{\nu}}] \]
 and
 \[ [\overline{n}^{(1)}_1, \dots, \overline{n}^{(1)}_{\overline{r}_1}], \dots, [\overline{n}^{(\overline{\nu})}_1, \dots,
 \overline{n}^{(\overline{\nu})}_{\overline{r}_{\overline{\nu}}}]. \]
Denote the counting functions of their semigroups by
 $H_1, \dots, H_{\nu}$ and $\overline{H}_1, \dots, \overline{H}_{\overline{\nu}}$, respectively.

 If
 \[ \{ \{ n^{(1)}_1, \dots, n^{(1)}_{r_1}, \dots, n^{(\nu)}_1, \dots, n^{(\nu)}_{r_{\nu}} \} \} =
     \{ \{ \overline{n}^{(1)}_1, \dots, \overline{n}^{(1)}_{\overline{r}_1}, \dots, \overline{n}^{(\overline{\nu})}_1, \dots, \overline{n}^{(\overline{\nu})}_{\overline{r}_{\overline{\nu}}} \} \} \]
 \emph{as multisets},
 then
 \[ H_1 \diamond \dots \diamond H_{\nu} = \overline{H}_1 \diamond \dots \diamond \overline{H}_{\overline{\nu}}. \]
\end{theorem}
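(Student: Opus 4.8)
The plan is to reduce the statement to a \emph{single-branch decomposition lemma} and then to exploit only the associativity and commutativity of $\diamond$. For an integer $n\geq 1$ write $G_n := H_{\langle n, n+1\rangle}$ for the counting function of the semigroup of the ordinary cusp of multiplicity $n$ (its multiplicity sequence is the one-term sequence $[n]$, and $G_1=\mathrm{id}$, the unit of $\diamond$). The heart of the matter is the following.

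\medskip
\noindent\emph{Decomposition Lemma.} If a plane branch has multiplicity sequence $[n_1,\dots,n_r]$ and semigroup counting function $H_\Gamma$, then $H_\Gamma = G_{n_1}\diamond G_{n_2}\diamond\cdots\diamond G_{n_r}$.
\medskip

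\noindent Granting this, the theorem is immediate: applying the lemma to each of the $\nu$ branches and using that $\diamond$ is associative and commutative, one gets that $H_1\diamond\cdots\diamond H_\nu$ equals the $\diamond$-product of the functions $G_n$, one for each element $n$ of the multiset $\{\{n^{(1)}_1,\dots,n^{(\nu)}_{r_\nu}\}\}$ (the entries equal to $1$ being irrelevant). This expression manifestly depends only on the multiset, so two collections with identical multisets yield the same min-convolution; the same argument applies verbatim to the barred collection, giving $H_1\diamond\cdots\diamond H_\nu=\overline H_1\diamond\cdots\diamond\overline H_{\overline\nu}$.

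To prove the Decomposition Lemma I would induct on the length $r$ of the multiplicity sequence. For $r=0$ (a smooth branch) $H_\Gamma=\mathrm{id}$, and for $r=1$ the claim holds by the definition of $G_{n_1}$. For the inductive step, blow up the base point $O$ once: the strict transform $\tilde C$ is a branch whose infinitely near points are exactly those of $C$ other than $O$, so its multiplicity sequence is $[n_2,\dots,n_r]$, and by the inductive hypothesis its counting function satisfies $H_{\Gamma'}=G_{n_2}\diamond\cdots\diamond G_{n_r}$. It therefore suffices to establish the \textbf{single blow-up formula} $H_\Gamma=G_{n_1}\diamond H_{\Gamma'}$. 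For this I would use that the value semigroup of a branch equals its set of intersection multiplicities with all curve germs, together with the classical blow-up relation $(C\cdot D)_O=n_1\cdot\mathrm{mult}_O(D)+(\tilde C\cdot\tilde D)_{O'}$. Writing $g$ for a defining function of $D$ of multiplicity $k=\mathrm{mult}_O(D)$ and $\tilde g$ for its strict transform, the value of $g$ along $C$ splits as $v=k\,n_1+w$ with $w=(\tilde C\cdot\tilde D)_{O'}\in\Gamma'$, and conversely every element of $\Gamma$ arises in this form. The role of the block $\langle n_1,n_1+1\rangle=\{c\,n_1+b:0\leq b\leq c\}$ is precisely to record, for each total contribution, the admissible ``carry'' $b$ passed into $\Gamma'$.

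The main obstacle is exactly this last combinatorial identification: not every pair $(k,w)$ with $k\geq 0$ and $w\in\Gamma'$ is realized by an actual germ $D$, because a function of multiplicity $k$ at $O$ can vanish along $\tilde C$ at $O'$ only to a controlled extent, and one must show that the counting function of the realized value set $\{k\,n_1+w\}$ coincides with $(G_{n_1}\diamond H_{\Gamma'})$. I expect the cleanest route is to pass to the complementary gap-counting functions $N:=\mathrm{id}-H$, under which $\diamond$ becomes the \emph{max}-convolution $\star$ (since $\min_{j_1+j_2=j}(H_1(j_1)+H_2(j_2))$ turns into $\max_{j_1+j_2=j}(N_1(j_1)+N_2(j_2))$); the target then reads $N_\Gamma=N_{G_{n_1}}\star N_{\Gamma'}$, a statement about how the ``missing'' intersection multiplicities of $\Gamma$ distribute between an $n_1$-scale part and the gaps of the blown-up branch. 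The crux is to verify that the optimal splitting in this max-convolution is realized by the geometric decomposition $v=k\,n_1+w$, i.e.\ that the proximity constraint coupling $k$ to the achievable $w$ is exactly the staircase constraint $0\leq b\leq c$ of $\langle n_1,n_1+1\rangle$; the small cases $H_{\langle 3,5\rangle}=G_3\diamond G_2$ and $H_{\langle 2,7\rangle}=G_2\diamond G_2\diamond G_2$ can serve as sanity checks and as a guide to the general bookkeeping.
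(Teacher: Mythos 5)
Your reduction is exactly the paper's first step: by associativity of $\diamond$ everything comes down to the single blow-up identity $H_{[n_1,\dots,n_r]}=H_{[n_1]}\diamond H_{[n_2,\dots,n_r]}$ (your $H_\Gamma=G_{n_1}\diamond H_{\Gamma'}$), and your induction on $r$ adds nothing beyond that. The problem is that this identity \emph{is} the entire content of the theorem, and you have not proved it -- you say so yourself ("the main obstacle is exactly this last combinatorial identification", "the crux is to verify\dots"). Two things are missing and neither is routine. First, an inequality for \emph{every} splitting: for all $j$ one needs $H_\Gamma(l)\leq H_{[n_1]}(j)+H_{\Gamma'}(l-j)$, which is not the statement that the realized values $v=kn_1+w$ generate $\Gamma$; your intersection-theoretic picture only describes which $v$ occur, not why the min over all (including non-geometric) splittings is bounded below by $H_\Gamma$. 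Second, the existence of an optimal splitting attaining equality for each $l$. Passing to gap functions and max-convolution is a correct reformulation but does not supply either ingredient.

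The paper closes this gap purely combinatorially, with no germs $D$ at all: it invokes the classical relation between the Ap\'ery sets of a semigroup and of its blowup (Proposition \ref{prop:AB}, $a_j+jm=b_j$), decomposes both semigroups into layers $\Gamma^{(i)}=a_i+m\Z_{\geq 0}$, and reduces the inequality to the bound $\#A_{j,l}\leq H_{[m]}(j)$ -- first for $j$ a multiple of $m$, then for intermediate $j$ via a step-by-step growth estimate -- before exhibiting the explicit optimal index $j=\min\{i_0m,\,l\}$. If you want to salvage your geometric route, you would have to prove that the "proximity constraint" on achievable pairs $(k,w)$ is precisely the staircase of $\langle n_1,n_1+1\rangle$ \emph{and} separately establish the inequality for non-realized splittings; the Ap\'ery-set identity is the tool that encodes exactly this staircase, so you would in effect be reproving Proposition \ref{prop:AB}. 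As it stands, the proposal is a correct reduction plus a plausible but unexecuted plan for the one step that carries all the difficulty.
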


\begin{proof}
 Denote the counting function of the semigroup of a singularity with mutliplicity sequence
 $[n_1, n_2, \dots, n_r]$ by $H_{[n_1, n_2, \dots, n_r]}$. Due to the obvious associativity of
 the minimum-convolution, for the statement of the theorem it is enough to show that
 \[ H_{[n_1, n_2, \dots, n_r]} = H_{[n_1]} \diamond  H_{[n_2, \dots, n_r]}. \]
 This will be proved in the next subsection (as Proposition \ref{prop:blowuphilbert}).
\end{proof}

\begin{corollary}\label{cor:combstab}
 Assume that $K = K_1 \# \dots \# K_{\nu}$ and $\overline{K} = \overline{K}_1 \# \dots \# \overline{K}_{\overline{\nu}}$ are connected sums of algebraic knots with summands as above.
 If the collections of numbers coming from multiplicity sequences corresponding to the algebraic knots are equal
 as multisets, then
 \[ \mathbb{H}^0(S^3_{-d}(K),a) \cong \mathbb{H}^0(S^3_{-d}(\overline{K}),a) \]
 for any integer $d > 0$ and any {\rm Spin}$^c$--structure $a \in\{ 0, \dots, d-1\}$.

 The same is true for
 ${\rm eu}\,\mathbb{H}^0$ as well.
\end{corollary}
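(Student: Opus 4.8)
The plan is to obtain Corollary \ref{cor:combstab} as an essentially formal consequence of Theorem \ref{thm:minconv} together with the structural statement recorded in Remark \ref{generalh0complete}. The point is that once the multiplicity multisets of the two knot collections agree, all three pieces of data that determine $\mathbb{H}^0$ --- namely $d$, $\delta$, and the minimum--convolution $H$ --- coincide for $K$ and $\overline{K}$.

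First I would apply Theorem \ref{thm:minconv} directly. By hypothesis the multisets $\{\{n^{(i)}_j\}\}$ and $\{\{\overline{n}^{(i)}_j\}\}$ are equal, so the theorem yields at once
\[ H := H_1 \diamond \dots \diamond H_{\nu} = \overline{H}_1 \diamond \dots \diamond \overline{H}_{\overline{\nu}} =: \overline{H}. \]
Next I would check that the sum of delta--invariants is also preserved. This is immediate from formula (\ref{eq:deltamult}): the right--hand side $\sum_{i,j} n^{(i)}_j(n^{(i)}_j-1)/2$ is a function of the multiset of multiplicities alone, so equality of the multisets forces $\delta = \overline{\delta}$. (Alternatively, one recovers $\delta$ from $H$ through its asymptotics: since each defect $H_i(x)-x=-\#\{\text{gaps of }\Gamma_i<x\}$ attains its minimum $-\delta_i$ for $x$ large, the minimum--convolution satisfies $H(j)=j-\delta$ for $j\gg 0$; thus $H=\overline{H}$ already entails $\delta=\overline{\delta}$.)

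With these two equalities in hand the conclusion follows from Remark \ref{generalh0complete}. Both links arise as $(-d)$--surgeries for the same integer $d$, so $H_1(S^3_{-d}(K))=H_1(S^3_{-d}(\overline{K}))=\mathbb{Z}_d$ and the \textrm{Spin}$^c$--structures are labelled by the same set $\{0,\dots,d-1\}$; fixing $a$ in this common range, Remark \ref{generalh0complete} (resting on the structural results of \cite{NR}) asserts that the graded $\mathbb{Z}[U]$--module $\mathbb{H}^0(S^3_{-d}(K),a)$ is determined by $d$, $\delta$, and $H$ alone. Since these agree for $K$ and $\overline{K}$, we obtain the isomorphism $\mathbb{H}^0(S^3_{-d}(K),a)\cong\mathbb{H}^0(S^3_{-d}(\overline{K}),a)$. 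The statement for ${\rm eu}\,\mathbb{H}^0$ then follows, either because the Euler characteristic is an invariant of the graded module, or directly from formula (\ref{eq:generalh0}), whose right--hand side depends only on $d$, $\delta$, and $H$.

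From the viewpoint of the corollary itself there is essentially no obstacle: all the genuine work lies in Theorem \ref{thm:minconv} (reduced to Proposition \ref{prop:blowuphilbert}), which is proved separately. The only points demanding a moment's care are verifying that $\delta$, and not merely $H$, is preserved --- handled by (\ref{eq:deltamult}) --- and checking that the module--level (rather than just Euler--characteristic--level) conclusion is legitimate, which is exactly what Remark \ref{generalh0complete} supplies per \textrm{Spin}$^c$--structure.
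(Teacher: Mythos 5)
Your argument is correct and is essentially the paper's own proof, just written out in more detail: the paper likewise cites Theorem \ref{thm:minconv} (implicitly, as the preceding result), formula (\ref{eq:deltamult}) for the invariance of $\delta$, and Remark \ref{generalh0complete} (resp. formula (\ref{eq:generalh0})) to conclude. The extra observations you add --- that $\delta$ can also be read off from the asymptotics of $H$, and the explicit check that the {\rm Spin}$^c$--structures are indexed by the same set --- are fine but not needed beyond what the paper records.
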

\begin{proof} Use (\ref{eq:deltamult}) and  Remark \ref{generalh0complete} (resp. formula (\ref{eq:generalh0})).
\end{proof}

\begin{remark} \label{ren:Notstable}
 Note that a similar statement is not true for $\mathbb{H}^q$ ($q\geq 1$), not
even for the numerical value ${\rm eu\ } \mathbb{H}^{\ast}$;
 see e.g.
 %
%
%
%
the links of superisolated singularities corresponding to the curves from  \ref{checksporadic}.
 The two curves have the same degree, the same multiset of multiplicities,
 but different $F$-functions and different (canonical) lattice cohomologies as well.
\end{remark}

\bekezdes
It is quite surprising that from the point of view of the zeroth lattice cohomology only the collection
of multiplicities `put together' is important. This fact makes a lot easier to compute
$\mathbb{H}^0(S^3_{-d}(K),a)$ in many cases. We can view this result in the following way as well:
the zeroth lattice cohomology shows a stability with respect to the `combinatorial surgery' of
moving multiplicity numbers from one multiplicity sequence to another. We illustrate this by a
simple example from \cite[table in Section 2.3]{BLMN1}.

There exist cuspidal curves of degree $5$ with the following cusp data (we present the multiplicity sequences):
\begin{itemize}
 \item $[3], [2_3] \ (\nu=2)$
 \item $[3,2], [2_2] \ (\nu = 2)$
 \item $[3], [2_2], [2] \ (\nu = 3)$
\end{itemize}
Of course the corresponding surgery manifolds have $\mathbb{H}^0(S^3_{-d}(K),0)$ as prescribed by
Theorem \ref{thmblalt}. From Corollary \ref{cor:combstab} we see immediately without any computation
that these manifolds must have identical zeroth lattice cohomology and not only for the canonical
Spin$^c$--structure $a = 0$, but for the other values $a = 1, \dots, 4$ as well.

Notice that curves of degree $5$ with the following cusp data do not exist:
\begin{itemize}
 \item $[3,2], [2], [2] \ (\nu = 3)$
 \item $[3], [2], [2], [2] \ (\nu = 4)$
\end{itemize}
However, the corresponding surgery manifolds also have $\mathbb{H}^0(S^3_{-d}(K),a)$ as above
(and to see this we do not need any further computations, since it is obvious from the multiset
of multiplicities).

In general, we have the following statement.
\begin{corollary} \label{cor:testweak}
 Assume that we have local singularity types which are candidates to be the singularities of a
 rational cuspidal plane curve of degree $d$ in the sense of Definition \ref{def:candidates},
 and they satisfy the necessary condition given by Theorem \ref{thmbl}.
 Then any other collection of local singularity types, such that the multiset of the occurring
 multiplicities is the same as in the case of the original collection of singularities,
 as a new candidate  satisfies the necessary condition given by Theorem \ref{thmbl} as well.
\end{corollary}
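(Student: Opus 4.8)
The plan is to reduce the statement of Corollary \ref{cor:testweak} entirely to the already-established Corollary \ref{cor:combstab}, since the two are almost formally identical once the right invariant is isolated. Recall that by the reformulation of Theorem \ref{thmbl} given in Theorem \ref{thmblalt}, the necessary condition of Borodzik and Livingston is equivalent to the single numerical identity ${\rm eu\ }\mathbb{H}^0_{\rm can}(S^3_{-d}(K))=d(d-1)(d-2)/6$. Thus the condition a candidate must satisfy is \emph{not} a statement about the individual semigroups $H_i$, but only about their minimum--convolution $H=H_1\diamond\cdots\diamond H_\nu$ evaluated at the multiples of $d$ (equivalently, about ${\rm eu\ }\mathbb{H}^0_{\rm can}$). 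This is the key observation that makes the corollary plausible.

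First I would verify that the two candidate collections are both candidates in the sense of Definition \ref{def:candidates} for the \emph{same} degree $d$; this is immediate, because by \eqref{eq:deltamult} the sum of delta-invariants $\delta$ depends only on the multiset of multiplicities $\{\{n^{(i)}_j\}\}$, and this multiset is assumed unchanged. Hence the defining relation $2\delta=(d-1)(d-2)$ holds for the new collection as soon as it holds for the old one. Next, I would invoke Corollary \ref{cor:combstab}: since the two collections of algebraic knots share the same multiset of multiplicities, we have $\mathbb{H}^0(S^3_{-d}(K),a)\cong \mathbb{H}^0(S^3_{-d}(\overline K),a)$ for every $a$, and in particular ${\rm eu\ }\mathbb{H}^0_{\rm can}(S^3_{-d}(K))={\rm eu\ }\mathbb{H}^0_{\rm can}(S^3_{-d}(\overline K))$. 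Equivalently, the minimum-convolutions agree, $H=\overline H$, by Theorem \ref{thm:minconv}, so in particular $H(jd+1)=\overline H(jd+1)$ for all $j=0,\dots,d-3$.

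Finally, I would translate this back through Theorem \ref{thmblalt}. The original collection satisfies the Borodzik--Livingston condition, which by that theorem reads ${\rm eu\ }\mathbb{H}^0_{\rm can}(S^3_{-d}(K))=d(d-1)(d-2)/6$. Since this Euler characteristic is unchanged under passage to $\overline K$, the new collection satisfies the identical numerical identity, hence (again by Theorem \ref{thmblalt}, or directly by \eqref{eq:h0formula} together with Lemma \ref{lem:Bezout}) it satisfies the necessary condition of Theorem \ref{thmbl} as well. This completes the argument.

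There is essentially no hard analytic or combinatorial step remaining here, because the genuine content has already been absorbed into Theorem \ref{thm:minconv} (the invariance of $H$ under redistribution of multiplicities) and into the reformulation Theorem \ref{thmblalt}. The only point that requires a word of care -- and the one I would expect a careful referee to probe -- is that the Borodzik--Livingston \emph{equalities} at all the relevant multiples of $d$ are genuinely equivalent to the \emph{single} scalar ${\rm eu\ }\mathbb{H}^0_{\rm can}$; this relies on the B\'ezout inequalities of Lemma \ref{lem:Bezout}, which force the term-by-term inequalities $H(jd+1)\ge (j+1)(j+2)/2$, so that equality of the sum is equivalent to equality in each term. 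I would make this reduction explicit rather than leaving it implicit, since it is precisely the mechanism that lets an identity about the aggregate invariant $H=\overline H$ recover the full collection of Borodzik--Livingston equalities for the new candidate.
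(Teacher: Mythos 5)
Your proposal contains the correct and complete argument as a sub-step: by Theorem \ref{thm:minconv} the two collections have the same minimum--convolution, $H=\overline H$, and since the necessary condition of Theorem \ref{thmbl} for a \emph{candidate} is precisely the list of equalities $H(jd+1)=(j+1)(j+2)/2$ for $j=0,\dots,d-3$ (together with $2\delta=(d-1)(d-2)$, which is also preserved by \eqref{eq:deltamult}), the new candidate inherits them term by term. This is exactly the paper's (implicit) proof, which deduces the corollary directly from Theorem \ref{thm:minconv} and Corollary \ref{cor:combstab}; you could stop there.

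The detour you build around this, however, has a genuine gap at the point you yourself flag as the one ``requiring a word of care.'' You propose to pass to the single scalar ${\rm eu}\,\mathbb{H}^0_{\rm can}=d(d-1)(d-2)/6$, transfer that scalar to $\overline K$, and then recover the term-by-term equalities for the new candidate using Lemma \ref{lem:Bezout}. But Lemma \ref{lem:Bezout} is a consequence of B\'ezout's theorem for an \emph{existing} curve $C\subset\mathbb{C}P^2$; it is not available for a mere candidate, and the whole point of the corollary is that the new collection need not be realizable (see the degree-$5$ examples $[3,2],[2],[2]$ and $[3],[2],[2],[2]$ immediately preceding it, which do not correspond to actual curves). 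Without those a priori inequalities, equality of the sum $\sum_j\overline H(jd+1)$ does not force equality in each term, so the scalar formulation is strictly weaker for a general candidate and the final step of your reduction fails. The fix is simply to drop the Euler-characteristic reformulation and argue from $H(jd+1)=\overline H(jd+1)$ directly.
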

\begin{remark} \label{rem:stability}

(a) This result enlarge the applicability of the criterion provided by Borodzik--Livingston Theorem \ref{thmbl}
drastically: if we reorganize the multiplicity numbers of a
rational cuspidal curve candidate (by keeping the multiset), then the
 new combinatorial candidate satisfies the
 output of the Borodzik--Livingston Theorem \ref{thmbl}
 if and only if the original candidate satisfied it (regardless of the algebraic realizability).

 This shows that although in the  Borodzik--Livingston Theorem \ref{thmbl}
 the algebraic realizability is important, in reality it matters `less', and presumably
 it can be replaced by a much weaker assumption. E.g., a possible `assumption candidate' requires only the
 smooth realizability of the curve (near the singular points a smooth model of the singular local
 embeddings, otherwise a smooth embedding). In fact, analyzing the proof of \cite{BL1},
 only this data is used. It would be interesting to prove that two
 candidates with equivalent data in the sense of Theorem \ref{thm:minconv} can/cannot  be
 simultaneously smoothly embedded in ${\mathbb C}{\mathbb P}^2$.

(b) As there exist rational cuspidal curves with arbitrarily long multiplicity sequences (even in
the unicuspidal case, see e.g. Orevkov's curves in \cite{Or}), 
the above corollary also shows that Theorem \ref{thmbl} cannot provide
any restriction on the number of cusps of rational cuspidal curves. (It is conjectured that the number
of cusps is always less than five, i.e. $\nu \leq 4$, see e.g. \cite{Pio}. A result of Tono shows that
$\nu \leq 8$, see \cite{Tono}, cf. also Example 6.16 of \cite{BL1}.)
\end{remark}
\subsection{The behaviour of the counting function under the blowup.}

The goal of this subsection is to prove Proposition \ref{prop:blowuphilbert}, thus completing the proof
of Theorem \ref{thm:minconv}. The notations in this subsection are completely independent of the other
parts of this article.

Let $\Gamma_1$ and $\Gamma_2$ be semigroups of plane curve singularities. We will assume that
 $\Gamma_1$ is the blowup of $\Gamma_2$ (once).
 (That is, the first cusp is obtained from the second by blow up.)
 Let $H_1(i)$ and $H_2(i)$ be the corresponding counting functions,
  i.e. $H_\ell(i) = \#\{ s \in \Gamma_\ell : s < i \}$. Our goal is to compare $H_1$ and $H_2$.

Denote by $m$ the \emph{multiplicity} of the second (`more complex') singularity, i.e.
$m = \textrm{min}\{s \in \Gamma_2 : 0 < s\}$. The \emph{Ap\'{e}ry set} of a numerical semigroup with
respect to one of its elements is a standard invariant commonly used in semigroup theory.
It consists of the smallest elements of the semigroup from each (nonempty) residue class modulo the
given element. We consider the Ap\'{e}ry set of $\Gamma_2$ with respect to
$m$, that is,
 $\textrm{Ap}(m, \Gamma_2) = \{b_0, b_1, \dots, b_{m-1}\}$, where $0 = b_0 < b_1 < \dots < b_{m-1}$.
It is a complete residual system modulo $m$, and by the definition, for each $i \ (0 \leq i \leq m-1)$ we have
$b_i \in \Gamma_2$ but $b_i - m \notin \Gamma_2$.

The definition guarantees that $\Gamma_2 = \textrm{Ap}(m, \Gamma_2) + m \cdot \mathbb{Z}_{\geq 0}$.
In fact, for every element $s \in \Gamma_2$ there exist \emph{uniquely} $j \in \mathbb{Z}$ and
$u \in \mathbb{Z}$ such that $s = b_j + mu$, $0 \leq j \leq m-1$, $0 \leq u$.

\begin{lemma}
 If $\Gamma_1$ is the blowup of $\Gamma_2$, then $m \in \Gamma_1$ as well.
\end{lemma}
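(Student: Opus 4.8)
The plan is to realize both $\Gamma_1$ and $\Gamma_2$ as value semigroups of the \emph{same} analytic branch, computed through one fixed parametrization, and then to observe that passing to the strict transform can only enlarge the semigroup. First I would fix a parametrization $\phi\colon(\mathbb{C},0)\to(C,P)$ of the branch whose semigroup is $\Gamma_2$; composing functions with $\phi$ defines the order valuation $v=\mathrm{ord}_t$ on $\mathcal{O}_{C,P}$, so that by definition $\Gamma_2=v(\mathcal{O}_{C,P}\setminus\{0\})$ and $m=\min(\Gamma_2\setminus\{0\})$ is exactly the multiplicity. Blowing up $P$ once and passing to the strict transform produces a branch through a point $P'$ with the same normalization, and the blow-down restricts to a birational morphism from the strict transform onto $C$. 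This induces an inclusion of local rings $\mathcal{O}_{C,P}\hookrightarrow\mathcal{O}_{\widetilde{C},P'}$ that is compatible with $\phi$ (both rings sit inside $\mathbb{C}\{t\}$ via the common parametrization). Hence $\Gamma_2=v(\mathcal{O}_{C,P})\subseteq v(\mathcal{O}_{\widetilde{C},P'})=\Gamma_1$, and since $m\in\Gamma_2$ I conclude $m\in\Gamma_1$. (As a sanity check, this inclusion is consistent with the fact that the blow-up drops the delta-invariant by $\binom{m}{2}$, i.e. removes gaps.)

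Alternatively, and perhaps more transparently, I would exhibit a single function realizing the value $m$ directly on the strict transform. Choosing affine coordinates $(x,y)$ so that the tangent line of $C$ at $P$ is $\{y=0\}$, the transverse coordinate $x$ satisfies $\mathrm{ord}_t(x\circ\phi)=m$. In the chart $y=x\,y_1$ of the blow-up the strict transform passes through $P'$, while $x$ remains a regular coordinate function there; thus $m=\mathrm{ord}_t(x\circ\phi)$ is attained by a function regular at $P'$, which gives $m\in\Gamma_1$ without any reference to inclusions of semigroups.

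The only genuinely delicate point, in either version, is the compatibility of the two valuations, and it is where I would spend the care. In the first version one must check that the order along the branch of an element of $\mathcal{O}_{C,P}$ is unchanged when that element is viewed inside $\mathcal{O}_{\widetilde{C},P'}$ — equivalently, that pulling back by the blow-down and restricting to the strict transform agrees with composing with $\phi$. In the concrete version the corresponding subtlety is verifying that $P'$ actually lies in the chart in which $x$ stays a coordinate (equivalently, that the branch is tangent to $\{y=0\}$ rather than to $\{x=0\}$), so that $x$ is indeed regular at $P'$ and still has $t$-order $m$. Both checks are routine once the coordinates and the parametrization are fixed, and they constitute the short heart of the argument.
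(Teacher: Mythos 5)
Your proof is correct, and your second, coordinate version is in substance identical to the paper's one‑line argument: the paper simply observes that the strict transform and the \emph{reduced exceptional divisor} have intersection multiplicity $m$ at $P'$, and in the chart $y=xy_1$ the function $x$ is exactly the local equation of that exceptional divisor, so your statement that $x$ is regular at $P'$ with $t$-order $m$ is the same computation spelled out in coordinates (and your ``delicate point'' --- that $P'$ lies in this chart --- is automatic, since $P'$ is the point of the exceptional divisor corresponding to the tangent direction of the branch, which is the $x$-direction when the tangent line is $\{y=0\}$). Your first version is a genuinely different and slightly stronger route: the inclusion $\mathcal{O}_{C,P}\hookrightarrow\mathcal{O}_{\widetilde{C},P'}$ (both viewed inside the common normalization $\mathbb{C}\{t\}$, so the valuations agree) gives the full containment $\Gamma_2\subseteq\Gamma_1$, of which $m\in\Gamma_1$ is a special case; this is consistent with, and in fact refines, the description of the blowup via Ap\'ery sets in Proposition \ref{prop:AB} ($a_j=b_j-jm\le b_j$). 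The paper does not need the full inclusion, but it is a clean way to see the lemma, and nothing in your argument is missing.
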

\begin{proof}
The strict transform of the singular curve after the blowup and the reduced exceptional divisor of the blowup have intersection multiplicity $m$.
\end{proof}

 Therefore, we can consider the Ap\'{e}ry set with respect to $m$ of $\Gamma_1$ as well:
 $\textrm{Ap}(m, \Gamma_1) = \{ a_0, a_1, \dots, a_{m-1} \}$ is a complete residual system mod
 $m$ such that $a_i \in \Gamma_1$ but $a_i - m \notin \Gamma_1$ for all $0 \leq i \leq m-1$, and
 $0 = a_0 < a_1 < \dots < a_{m-1}$. Again, $\Gamma_1 = \textrm{Ap}(m, \Gamma_1) +
  m \cdot \mathbb{Z}_{\geq 0}$, i.e. for any $ s \in \Gamma_1$ there exist  unique
  $ j, u \in \mathbb{Z}$  such that
 $  s = a_j + mu, \ 0 \leq j \leq m-1, \ 0 \leq u$.

\begin{proposition}\label{prop:AB} (\cite[Lemme 2]{Ap}, \cite[Prop. 2.3]{BDF})
 The blowup of a semigroup is described by the two  Ap\'{e}ry sets with respect to the original
 multiplicity $m$ in the following way:
 \[ a_j + jm = b_j \ \ \ \ (j=0,\ldots,m-1). \]
\end{proposition}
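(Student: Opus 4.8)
The plan is to lift everything to local rings, reduce the statement to one membership assertion about $\Gamma_1$, and then settle that assertion with the proximity structure of the branch. Write $\mathcal{O}_2\subset\mathcal{O}_1\subset\mathbb{C}\{t\}$ for the local rings of the branch with semigroup $\Gamma_2$, of its blow-up (semigroup $\Gamma_1$), and of the normalization, and let $v=\mathrm{ord}_t$ be the normalization valuation, so that $\Gamma_\ell=v(\mathcal{O}_\ell)$. Pick $x\in\mathfrak{m}:=\mathfrak{m}_{\mathcal{O}_2}$ with $v(x)=m$ (a generic linear form). The blow-up is $\mathcal{O}_1=\mathcal{O}_2[\mathfrak{m}/x]=\bigcup_{k\ge 0}\mathfrak{m}^k x^{-k}$; in particular $\mathcal{O}_2\subseteq\mathcal{O}_1$, whence $\Gamma_2\subseteq\Gamma_1$, and passing to values gives the dictionary
\[ \Gamma_1=\bigcup_{k\ge 0}\bigl(v(\mathfrak{m}^k)-km\bigr), \]
which converts ``raising the $\mathfrak{m}$-adic order by one'' into ``subtracting one copy of $m$''. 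First I would record this description.

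Then I would run a counting reduction. For each residue class $c$ modulo $m$, both $\Gamma_2$ and $\Gamma_1$ meet $c$ in a progression of step $m$ (because $m\in\Gamma_2\subseteq\Gamma_1$); let $b_c$ and $a_c$ be their least elements, i.e.\ the Ap\'{e}ry elements. As $\Gamma_2\subseteq\Gamma_1$ and these lie in the same class, $b_c=a_c+k_c m$ with $k_c\ge 0$, where $k_c$ counts the elements of $\Gamma_1$ in class $c$ below $b_c$. Summing over classes,
\[ \sum_c k_c=\#(\Gamma_1\setminus\Gamma_2)=\delta_2-\delta_1=\binom{m}{2}, \]
the last equality being the standard fact that one blow-up at a point of multiplicity $m$ lowers the delta-invariant by $\binom{m}{2}$. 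Now sort the Ap\'{e}ry set of $\Gamma_2$ as $b_0<b_1<\dots<b_{m-1}$. The crux is the claim that $b_j-jm\in\Gamma_1$ for every $j$. Granting it, $b_j-jm$ lies in $\Gamma_1$ in the class $c_j$ of $b_j$, so $b_j-jm\ge a_{c_j}$, i.e.\ $k_{c_j}\ge j$; since the $c_j$ exhaust all classes, $\binom{m}{2}=\sum_c k_c=\sum_j k_{c_j}\ge\sum_j j=\binom{m}{2}$, forcing $k_{c_j}=j$ for all $j$. Hence $a_{c_j}=b_j-jm$, and setting $a_j:=a_{c_j}$ gives $b_j=a_j+jm$, as claimed.

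It remains to prove the membership claim $b_j-jm\in\Gamma_1$, and by the dictionary (taking $k=j$) it is enough to show $b_j\in v(\mathfrak{m}^j)$, i.e.\ that the value $b_j$ is attained by a function of $\mathfrak{m}$-adic order at least $j$. This is the step I expect to be the main obstacle, and it is exactly where the plane-branch hypothesis is essential: for a general numerical semigroup the identity of the Proposition fails. I would establish it through the proximity/multiplicity data of the branch --- realizing $b_j$ by a curvetta passing through the first $j$ infinitely near points of the branch, or reading $\mathrm{ord}_{\mathfrak{m}}$ directly off the multiplicity sequence $[n_1,\dots,n_r]$ --- and in fact I expect the sharp equality $\mathrm{ord}_{\mathfrak{m}}(b_j)=j$. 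That sharp form would give the claim together with the inequality $b_{j+1}-b_j>m$, the latter ensuring that the numbers $a_j=b_j-jm$ are already increasing, so that pairing $\{a_j\}$ and $\{b_j\}$ by sorted index (as in the statement) is legitimate. Proving $\mathrm{ord}_{\mathfrak{m}}(b_j)=j$ is the arithmetic content of the cited lemmas of Ap\'{e}ry and of Barucci--Dobbs--Fontana, and would take up the bulk of the argument.
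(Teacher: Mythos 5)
First, a remark on the comparison itself: the paper does not prove Proposition \ref{prop:AB} --- it is quoted from \cite[Lemme 2]{Ap} and \cite[Prop.\ 2.3]{BDF} --- so there is no in-paper argument to measure your proposal against, and it must be judged on its own. As such, it is a correct and rather elegant \emph{reduction}, but not yet a proof. The scaffolding is sound: the identification $\mathcal{O}_1=\mathcal{O}_2[\mathfrak{m}/x]=\bigcup_{k\ge 0}\mathfrak{m}^kx^{-k}$ and the resulting dictionary $\Gamma_1=\bigcup_k\bigl(v(\mathfrak{m}^k)-km\bigr)$ are standard for branches; the layer count $\sum_c k_c=\#(\Gamma_1\setminus\Gamma_2)=\delta_2-\delta_1=m(m-1)/2$ is right (and consistent with (\ref{eq:deltamult})); and the pigeonhole step correctly shows that the single membership claim $b_j-jm\in\Gamma_1$ for all $j$ forces $k_{c_j}=j$ and hence $a_{c_j}=b_j-jm$. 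You also rightly flag that one additionally needs $b_{j+1}-b_j>m$ in order to match the \emph{sorted} indexings, which is how the Proposition is phrased (compare the Remark following it, where this monotonicity is read off as a consequence).

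The genuine gap is that the one claim carrying all of the plane-branch-specific content --- $b_j\in v(\mathfrak{m}^j)$, equivalently $b_j-jm\in\Gamma_1$, or your sharp form $\mathrm{ord}_{\mathfrak{m}}(b_j)=j$ --- is asserted but never established; you yourself say it ``would take up the bulk of the argument.'' It is not a routine verification. For the branch $(t^4,t^6+t^7)$, with $\Gamma_2=\langle 4,6,13\rangle$ and $m=4$, the sorted Ap\'ery set is $\{0,6,13,19\}$; the element $b_2=13$ is realized only by maximal-contact curves such as $y^2-x^3$ (the naive candidates $y^j$, of values $0,6,12,18$, do not even produce the Ap\'ery set), so exhibiting an element of $\mathfrak{m}$-adic order $j$ with value $b_j$ genuinely requires the theory of approximate roots or an induction along the multiplicity sequence. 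Since that step \emph{is} the content of the cited lemmas of Ap\'ery and of Barucci--D'Anna--Fr\"oberg (not Barucci--Dobbs--Fontana, incidentally), what you have written reduces the Proposition to a mild strengthening of itself. To complete the argument you would need to actually carry out the proximity/curvetta analysis you only sketch, both for the membership claim and for the monotonicity $b_{j+1}-b_j>m$.
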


\begin{rem}
The previous proposition  implies that `the order is preserved', i.e. if $\Gamma_2$ is a semigroup
of a plane curve singularity which has multiplicity $m$ and the \emph{ordered} Ap\'{e}ry set with
respect to this multiplicity is $\textrm{Ap}(m, \Gamma_2) = \{b_0, b_1, \dots, b_{m-1}\}$ with
$0 = b_0 < b_1 < \dots < b_{m-1}$, then the series of inequalities
$0 = b_0 - 0\cdot m < b_1 - m < b_2 - 2m < \dots < b_{m-1} - (m-1)m$ must be satisfied.
(This is a nontrivial necessary condition for an algebraic numerical semigroup to be a semigroup
of a plane curve singularity!)
\end{rem}

Let $\Gamma_{[m]}$ be the semigroup of the plane curve singularity with multiplicity sequence $[m]$,
it is generated as a semigroup by $m$ and $m+1$. Denote
its counting function by $H_{[m]}$.

The counting functions $H_1$, $H_2$ and $H_{[m]}$ are related as follows.

\begin{prop}\label{prop:blowuphilbert} For all $l \geq 0$ one has
 \[ H_2(l) = \min_{0 \leq j \leq l}\  \{ H_1(l-j) + H_{[m]}(j) \}. \]
\end{prop}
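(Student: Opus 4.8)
The plan is to reduce everything to the Apéry sets with respect to $m$ and to lean on Proposition \ref{prop:AB}. Writing $\mathrm{Ap}(m,\Gamma_2)=\{b_0<\dots<b_{m-1}\}$ and $\mathrm{Ap}(m,\Gamma_1)=\{a_0<\dots<a_{m-1}\}$, Proposition \ref{prop:AB} gives $b_j=a_j+jm$, and since $m\in\Gamma_1$ (by the preceding Lemma) each semigroup is a disjoint union of progressions $\{e,e+m,e+2m,\dots\}$, one per residue class, so the counting function splits as a sum over residue classes. First I would record the two structural consequences of $b_j=a_j+jm$: one checks $a_j\le b_j$ in each class, hence $\Gamma_2\subseteq\Gamma_1$, and the ``extra'' set $N:=\Gamma_1\setminus\Gamma_2$ consists, in the class of $b_j$, of exactly the $j$ smallest elements $a_j,a_j+m,\dots,a_j+(j-1)m$ of $\Gamma_1$. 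This yields the per-class identity that the number of elements of $\Gamma_2$ below $l$ lying in the class of $b_j$ equals $\max\{0,\,(\text{number of such elements of }\Gamma_1)-j\}$, and it exhibits $\langle m,m+1\rangle$ as the universal model, in which the class indexed by $j$ starts at $j(m+1)$; in particular $N$ is in bijection with $\mathbb{Z}_{\ge0}\setminus\langle m,m+1\rangle$.

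Next I would dualize. Setting $g_\Gamma(l):=l-H_\Gamma(l)=\#\{s\notin\Gamma:0\le s<l\}$ and substituting $H=\mathrm{id}-g$ into the minimum convolution, the claim $H_2=H_1\diamond H_{[m]}$ becomes the maximum-convolution statement
\[
g_2(l)=\max_{p+q=l}\bigl(g_1(p)+g_{[m]}(q)\bigr).
\]
Ordering all gaps increasingly, this is in turn equivalent to the assertion that the $k$-th gap of $\Gamma_2$ equals $1+\min_{s+t=k}(\alpha_s+\beta_t)$, where $\alpha_\bullet,\beta_\bullet$ are the sorted gaps of $\Gamma_1$ and of $\langle m,m+1\rangle$ (with the convention $\alpha_0=\beta_0=-1$). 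The decomposition $\mathrm{gaps}(\Gamma_2)=\mathrm{gaps}(\Gamma_1)\sqcup N$ together with the bijection $N\leftrightarrow\mathrm{gaps}(\langle m,m+1\rangle)$ from the first step is exactly what feeds this reformulation.

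I would then establish the sorted identity $\gamma_k=1+\min_{s+t=k}(\alpha_s+\beta_t)$ by two inequalities. The bound $\gamma_k\le 1+\alpha_s+\beta_t$ (for every $s+t=k$) amounts to showing $g_2(\alpha_s+\beta_t+2)\ge s+t$, i.e.\ that at least $s+t$ gaps of $\Gamma_2$ lie at or below $\alpha_s+\beta_t+1$: the $\Gamma_1$-gaps already supply at least $s$ of them (since $\beta_t\ge-1$), and the remaining count is made up from $N$ using the explicit description above. For the reverse inequality I would take the split dictated by the actual partition of the gaps of $\Gamma_2$ below the relevant threshold into their $\mathrm{gaps}(\Gamma_1)$-part and their $N$-part, and verify that the model semigroup accounts for the $N$-part.

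The hard part will be that the residue class of a model gap and the residue class of its $N$-partner differ by the permutation $j\mapsto b_j\bmod m$ produced by Proposition \ref{prop:AB}, so the shift $a_j-j$ identifying $N$ with $\mathrm{gaps}(\langle m,m+1\rangle)$ is \emph{class-dependent}. Consequently neither a term-by-term comparison across residue classes nor a single ``universally optimal'' split $p+q=l$ succeeds; indeed the per-class optima are attained at different places, as one already sees in the tables of Section \ref{s:Combinatorial}. The genuine content is therefore that, after sorting, these class-dependent shifts nevertheless assemble into the min-plus convolution displayed above. I expect to settle this either by the merge-versus-sum computation on the sorted gap sequences sketched above, or by induction on the length of the multiplicity sequence (peeling off one blow-up at a time and using associativity of $\diamond$), which is where the combinatorial bookkeeping concentrates.
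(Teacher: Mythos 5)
Your setup is sound and closely parallels the paper's: you invoke Proposition \ref{prop:AB}, decompose both semigroups into the $m$ arithmetic progressions determined by their Ap\'ery sets, and correctly derive the per-class identity (the paper's version is $H_2(l)=\sum_i\#\{s\in\Gamma_1^{(i)}: s<l-im\}$). Your dualization to gap-counting functions and the reformulation as the sorted-gap identity $\gamma_k=1+\min_{s+t=k}(\alpha_s+\beta_t)$ are also correct and equivalent to the statement. The problem is that the proof stops exactly where the work begins. Your accounting for the inequality $g_2(\alpha_s+\beta_t+2)\ge s+t$ --- ``the $\Gamma_1$-gaps supply at least $s$, and the remaining $t$ come from $N$'' --- is false as stated. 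Take $\Gamma_2=\langle 2,9\rangle$ (multiplicity sequence $[2_4]$), so $m=2$, $\Gamma_1=\langle 2,7\rangle$, $N=\Gamma_1\setminus\Gamma_2=\{7\}$, and the model is $\langle 2,3\rangle$ with $\beta_1=1$. For $(s,t)=(1,1)$ the threshold is $\alpha_1+\beta_1+1=3$, and $N$ contributes \emph{zero} elements below it; the required count of $2$ is met only because $\Gamma_1$ has two gaps there. So neither summand separately obeys your bound, only their sum does --- and proving that the sum always does is precisely the proposition. (The alternative reading, that ``whatever is missing comes from $N$,'' is circular.) You yourself flag this when you note that the identification $N\leftrightarrow\mathrm{gaps}(\langle m,m+1\rangle)$ involves a class-dependent shift $a_j-j$ and that no term-by-term comparison works; but you then defer the resolution to an unexecuted ``merge-versus-sum computation'' or an induction, so the core of the argument is missing. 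The same issue afflicts your reverse inequality, where the $t$ elements of $N$ below $\gamma_k$ need not correspond to the $t$ smallest model gaps under the nonuniform shift.

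For comparison, the paper closes this gap by writing $H_2(l)-H_1(l-j)=\#A_{j,l}-\#B_{j,l}$ with $A_{j,l}^{(i)}=\{s\in\Gamma_1^{(i)}: l-j\le s<l-im\}$, discarding $\#B_{j,l}\ge 0$, and then proving $\#A_{j,l}\le H_{[m]}(j)$ in two stages: first for $j=\mu m$ via the clean per-class bound $\#A_{\mu m,l}^{(i)}\le\max\{\mu-i,0\}$, then for intermediate $j$ by showing $\#A_{j,l}$ grows by at most $1$ per step while $H_{[m]}$ grows as fast as possible between consecutive multiples of $m$; equality is obtained by exhibiting the explicit minimizer $j=\min\{i_0m,l\}$, where $i_0$ is the first index with $l-im\le a_i$. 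Some such interpolation-between-multiples-of-$m$ argument (or an equivalent) is what your write-up still needs; without it the proposal is a correct reformulation of the statement, not a proof of it.
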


\begin{proof}
 We need to prove that for all $l \geq 0$, and for all $ j$ with $ 0 \leq j \leq l$ one has
 $ H_2(l) - H_1(l-j) \leq H_{[m]}(j)$; furthermore, that for all
 $l \geq 0$ equality holds for some $j$.

 It will be useful to view the semigroups as unions of `layers' according to the Ap\'{e}ry sets.
 Namely, for $i = 0, 1, \dots, m-1$ set $\Gamma_1^{(i)} := a_i + m\mathbb{Z}_{\geq 0}$.
 Then $\Gamma_1 = \sqcup_{i=0}^{m-1}\Gamma_1^{(i)}$ as a \emph{disjoint} union.
 Similarly, set $\Gamma_2^{(i)} := b_i + m\mathbb{Z}_{\geq 0}$, hence
$\Gamma_2 =\sqcup_{i=0}^{m-1} \Gamma_2^{(i)}$ as a \emph{disjoint} union.  Then
\[ H_1(l-j) = \sum_{i}\ \#\{ s \in \Gamma_1^{(i)} : s < l-j \}, \ \ \
H_2(l) = \sum_i\ \#\{ s \in \Gamma_2^{(i)} : s < l \}. \]
By Proposition \ref{prop:AB} we get that the $i$th layer $\Gamma^{(i)}_2$ of the semigroup $\Gamma_2$ just has to be
shifted to the left by $im$ to get the $i$th layer $\Gamma^{(i)}_1$ of the semigroup $\Gamma_1$. Hence,
\[ H_2(l) = \sum_i\ \#\{ s \in \Gamma_1^{(i)} : s < l-im \}. \]
Now for a fixed $l$ the difference which has to be (sharply)
bounded from above  can be written as a difference of set--cardinalities,
the sets being differences of subsets of the semigroup (layers of) $\Gamma_1$:
\[ H_2(l) - H_1(l-j) = \#\{A_{j,l}\} - \#\{B_{j,l}\}, \]
where $A_{j,l} = \sqcup_{i=0}^{m-1}A_{j,l}^{(i)}$ and $B_{j,l} =\sqcup_{i=0}^{m-1}
 B_{j,l}^{(i)}$ as disjoint unions, with
\[ A_{j,l}^{(i)} = \{ s \in \Gamma_1^{(i)} : l-j \leq s < l-im  \}, \ \ \
 B_{j,l}^{(i)} = \{ s \in \Gamma_1^{(i)} : l-im \leq s < l-j  \}. \]
(Note that for all $i$, at least one of $A_{j,l}^{(i)}$ and $B_{j,l}^{(i)}$ is empty.)

Hence, we need  to prove that $\#\{A_{j,l}\} - \#\{B_{j,l}\} \leq H_{[m]}(j)$, and
for each $l$ equality holds for some $j = 0, 1, \dots, l$.

The inequality follows from  $\#\{B_{j,l}\} \geq 0$ and  $\#\{A_{j,l}\} \leq H_{[m]}(j)$.
This second inequality is not straightforward. First we check it for the multiples of $m$, i.e.
for $j$'s of form $j = \mu m$:
\begin{equation}\label{eq:HM}
\#\{ A_{\mu m, l} \} = \sum_{i=0}^{m-1} \#\{ A_{\mu m, l}^{(i)} \} \leq \sum_{i=0}^{m-1}\textrm{max}\{ \mu - i, 0 \} = \sum_{i=0}^{\mu}\textrm{min}\{i, m\} = H_{[m]}(\mu m). \end{equation}
This is true because $\#\{ A_{\mu m, l}^{(i)} \} = \# \{ r \in \mathbb{Z}_{\geq 0} :
 l-\mu m \leq a_i + rm < l - im \} \leq \textrm{max}\{ \mu - i, 0 \}$. 
 (This upper bound is valid even if $\mu m > l$.)

For $j$'s not of the form $\mu m$ the inequality follows from two facts. 
First,  $0 \leq \#\{ A_{j+1, l} \} - \#\{ A_{j, l} \} \leq 1$ 
(for $\#\{ A_{j+1, l}^{(i)} \} - \#\{ A_{j, l}^{(i)} \} \in \{0, 1\}$, 
and except for at most one $i$, the difference is $0$, as elements of $A_{j, l}^{(i)}$ 
for different $i$'s have different residues modulo $m$).
 Second,  $H_{[m]}(j)$ as an upper bound,
 is `as generous as possible' between multiples of $m$, meaning that it increases from $H_{[m]}(\mu m)$ 
 to $H_{[m]}((\mu+1) m)$ `as fast as possible':  for $\mu m \leq j = \mu m + \gamma < (\mu + 1) m$
 being $H_{[m]}(\mu m + \gamma) = \textrm{min}\{\gamma, \mu + 1\} + H_{[m]}(\mu m)$.
These two facts and (\ref{eq:HM}) show the inequality for $0\leq \gamma\leq \mu+1$ 
(where $j=\mu m+\gamma$).
If $\mu+1<\gamma<m$, then we have 
$H_{[m]}(\mu m + \gamma) = H_{[m]}((\mu+1)m) \geq \#\{ A_{(\mu+1)m, l} \} \geq \#\{ A_{\mu m + \gamma, l} \}$
(here we have used the inequality for $(\mu + 1)m$).

Next we show that for any $l$ there exists a $j$ for which equality holds. From the above, it is clear 
which conditions do we want to be satisfied. We will choose a $j$ such that $0 \leq j \leq l$, 
$j \leq (m-1)m$, $B_{j,l} = \emptyset$ and $\# \{ r \in \mathbb{Z}_{\geq 0} : l-j \leq a_i + rm < l - im \}
 = \#\{ A_{j, l}^{(i)} \} = \textrm{max}\{ \lceil\frac{j}{m}\rceil - i, 0 \}$ for all $i = 0, 1, \dots, m-1$.

For any $l$, let $i_0$ be the smallest index $i$ among $0, 1, \dots, m-1$ for which $l - im \leq a_i$ is already valid, if such index exists. If not, take $i_0 = m-1$.

It is not hard to see that $j = \textrm{min}\{i_0m, l\}$ is a good choice, i.e. for $j = \textrm{min}\{i_0m, l\}$ we will have equality in the upper bound. For if $j = i_0m$, then by the choice of $i_0$ we have 
$B_{i_0m,l} = \emptyset$ and $A^{(i)}_{i_0m,l} = \textrm{max}\{ i_0 - i, 0 \}$ for all possible $i$, and these two conditions (via (\ref{eq:HM})) are enough to guarantee
the equality $\#\{A_{i_0m,l}\} - \#\{B_{i_0m,l}\} = H_{[m]}(i_0m)$. If $j = l$ happens to be the case (i.e. if $l < i_0m$),
then by a similar argument as above, we have $H_{[m]}(i_0m) = A_{i_0m,l}$, hence
$A_{l,l} \leq H_{[m]}(l) \leq H_{[m]}(i_0m) = A_{i_0m,l} = A_{l,l} $, which implies $\#\{A_{l,l}\} - \#\{B_{l,l}\} = H_{[m]}(l)$ again, as $B_{l,l} = \emptyset$. 
\end{proof}


\end{document}